\numberwithin{equation}{section}
\newtheorem{theo}{Theorem}[section]
\newtheorem{lem}[theo]{Lemma}
\newtheorem{cor}[theo]{Corollary}
\newtheorem{prop}[theo]{Proposition}
\theoremstyle{remark}
\newtheorem{rmk}[theo]{Remark}
\theoremstyle{definition}
\newtheorem{defi}[theo]{Definition}
\newcommand{\eps}{\varepsilon}
\newcommand{\rd}{\mathrm{d}}
\newcommand{\qqtext}[1]{\qquad\mbox{#1}\qquad} % big space text[1] big space
\newcommand{\R}{\mathbb{R}}
\newcommand{\Z}{\mathbb{Z}}
\newcommand{\N}{\mathbb{N}}
\newcommand{\dist}{\operatorname{dist}}
\newcommand{\mrest}{  \,\raisebox{-.127ex}{\reflectbox{\rotatebox[origin=br]{-90}{$\lnot$}}}\,} %restriction of a measure
\renewcommand{\SS}{\mathsf{S}} % for the gradient-flow semi-group
\renewcommand{\d}{{\mathrm d}}			%derivative	%derivative
\newcommand{\dt}{{\d t}}
\newcommand{\ddt}{{\frac \d\dt}}
\newcommand{\sfd}{{\sf d}}				%distance
\newcommand{\X}{{\rm X}}				%state space
\newcommand{\E}{{\sf E}}
\newcommand{\sfn}{{\sf n}}				%outer normal vector
\renewcommand{\Z}{{\cal Z}}
\newcommand{\EVI}{{\rm EVI}}
\newcommand{\Dir}{{\rm Dir}}
\newcommand{\RCD}{{\rm RCD}}
\newcommand{\loc}{{\rm loc}}
\newcommand{\Dom}{\operatorname{D}}
\DeclareMathOperator*{\esssup}{ess\,sup}
\begin{document}

\title{Convex functions defined on metric spaces are pulled back to subharmonic ones by harmonic maps}
\date{}
\author{Hugo Lavenant
	\thanks{Department of Decision Sciences and BIDSA, Bocconi University, Via Roberto Sarfatti, 25, 20100 Milano MI Italy.
		email: hugo.lavenant@unibocconi.it
	}
	\and L\'eonard Monsaingeon
	\thanks{GFM Universidade de Lisboa, Campo Grande, Edif\'icio C6, 1749-016 Lisboa, Portugal
		and IECL Universit\'e de Lorraine, F-54506 Vandoeuvre-l\`es-Nancy Cedex, France.
		email: leonard.monsaingeon@univ-lorraine.fr
	}
	\and Luca Tamanini
	\thanks{CEREMADE (UMR CNRS 7534), Universit\'e Paris Dauphine PSL, Place du Mar\'echal de Lattre de Tassigny, 75775 Paris Cedex 16, France
		and INRIA-Paris, MOKAPLAN, 2 Rue Simone Iff, 75012, Paris, France.
		email: tamanini@ceremade.dauphine.fr}
	\and Dmitry Vorotnikov
	\thanks{University of Coimbra, CMUC, Department of Mathematics, 3001-501 Coimbra, Portugal.
		email: mitvorot@mat.uc.pt
	}
}
\maketitle
\begin{abstract}
If $u : \Omega\subset \R^d \to \X$ is a harmonic map valued in a metric space $\X$ and $\E : \X \to \R$ is a convex function, in the sense that it generates an $\EVI_0$-gradient flow, we prove that the pullback $\E \circ u : \Omega \to \R$ is subharmonic.
This property was known in the smooth Riemannian manifold setting or with curvature restrictions on $\X$, while we prove it here in full generality.
In addition, we establish generalized maximum principles, in the sense that the $L^q$ norm of $\E \circ u$ on $\partial \Omega$ controls the $L^p$ norm of $\E \circ u$ in $\Omega$ for some well-chosen exponents $p \geq q$, including the case $p=q=+\infty$.
In particular, our results apply when $\E$ is a geodesically convex entropy over the  Wasserstein space, and thus settle some conjectures of \emph{Y. Brenier. Extended Monge-Kantorovich theory. In Optimal transportation and applications (Martina Franca, 2001), volume 1813 of Lecture Notes in Math., pages 91-121. Springer, Berlin, 2003}.
\end{abstract}
%\tableofcontents
%%%%%%%%%%%%%%%%%%%%%%%%%%%%%%%%%%%%%%%%%%%%%%%%%%%%%%%%%%%%%%%%%%%%%%%%%%%%%%%%%%%%%%%%%%%%%%%%%%%%%%%%%%%%%%%%%%%%%%%%%%%%%%%%%%%%%%%%%%%%%%%%%%%%%%%%%%%%%%%%
\vspace{1cm}

Keywords: harmonic map, Dirichlet problem, gradient flow, convex entropy 

\textbf{MSC [2020] Primary 58E20, 58E35 Secondary 31C45, 53C23}

\section{Introduction}

%We start with the following observation. 

Let $\Omega\subset \R^d$, $D\subset \R^n$ be open domains, $u:\Omega\to D$ be a smooth \emph{harmonic map}, i.e.\ $\Delta u = 0$, and $\E:D\to \R$ be a smooth \emph{convex function}, i.e.\ $\nabla^2 \E \geq 0$.  A simple computation shows that $u$ pulls $\E$ back to a \emph{subharmonic function} on $\Omega$, i.e.\ $-\Delta (\E \circ u) \leq 0$. Consequently, $u(\partial \Omega)\subset C$ implies $u(\Omega)\subset C$ for any closed convex subset $C\subset D$, which can be viewed as a maximum principle, cf.\ \cite{Fu05} (just letting $\E$ be the distance to $C$). 

Ishihara \cite{Ish79} proved that the property of pulling (geodesically) convex functions defined on a Riemannian manifold (without boundary) back to subharmonic functions defined on another such manifold is actually \emph{equivalent} to the harmonicity of the corresponding map between the manifolds, see also \cite[Chapter 9]{Jost17}. Several authors generalized this result to more abstract settings.
Namely, the above-mentioned equivalence is known provided either (i) the target $\X$ of the alleged harmonic map  $u:\Omega\to \X$ is  a metric tree or a Riemannian manifold of non-positive curvature (NPC in short) \cite{Sturm05}, or (ii) when the source domain $\Omega$ is a $1$-dimensional Riemannian polyhedron and the target $\X$ is a locally compact geodesic NPC metric space \cite{EF01}, and finally (iii) when the the domain $\Omega$ is a Riemannian polyhedron of arbitrary dimension but the target $\X$ is either a smooth Riemannian manifold without boundary or a Riemannian polyhedron of non-positive curvature of dimension $\leq 2$, cf.\ \cite{EF01, Fu05}.

In this work we will only concentrate on the ``direct'' statement\footnote{For the reasons already mentioned the property of pulling convex functions back to subharmonic ones is sometimes referred to as the \emph{Ishihara property}.}, that is:
\begin{equation}
\label{statement_ishihara}
[ u:\Omega \to \X \text{ harmonic and } \E : \X \to \R \text{ convex} ] \Rightarrow [ \E \circ u \text{ subharmonic on } \Omega ]
\end{equation}
This was established by Fuglede \cite[Theorem 2]{Fu05} provided the domain $\Omega$ is a Riemannian polyhedron, the target $\X$ is a simply connected complete geodesic NPC metric space, and $\E:\X\to \R$ is a continuous convex function.

Recent advances tackled a particular metric space $\X$ that does not admit any upper bound on the curvature, namely, when $\Omega$ is a bounded open domain $\Omega\subset \R^d$  and $\X=\mathcal P_2(D)$ is the space of probability measures over a convex compact set $D\subset \R^n$ equipped with the $2$-Wasserstein distance (the latter is known to be a positively curved metric space -- PC, in short -- see \cite{AmbrosioGigliSavare08}).
In this framework the statement \eqref{statement_ishihara} was conjectured by Y.~Brenier in \cite{B01} when $\E : \mathcal{P}_2(D) \to [0,+\infty]$ is the Boltzmann entropy $H(\rho)=\int_D\rho\log\rho\, \rd x$, which is geodesically convex on $\mathcal P_2(D)$ \cite{Villani03, AmbrosioGigliSavare08}.
The first author gave recently a partial positive answer to this conjecture in \cite{HL19}:
he actually proved that, given a geodesically convex function $\E:\mathcal P_2(D)\to [0,+\infty]$ and a boundary condition $u^b:\partial \Omega\to \mathcal P_2(D)$, one can find at least \emph{one} solution $u:\overline\Omega\to \mathcal P_2(D)$ of the Dirichlet problem with boundary conditions $u^b$ such that $-\Delta(\E \circ u)\leq 0$ in $\Omega$ in the sense of distributions.
This does not automatically imply that the Ishihara property \eqref{statement_ishihara} holds for \emph{every} harmonic map because the uniqueness of the solutions to the corresponding Dirichlet problem has not yet been established.
He also provided a generalized maximum principle: namely, the essential supremum of the harmonic pullback to $\overline \Omega$ of a convex entropy on the Wasserstein space is always attained at $\partial \Omega$.
Notably, at this level of generality this does not follow immediately from the subharmonicity of the pullback since the latter one is subharmonic only in $\Omega$ and is not necessarily continuous up to the boundary.

\textbf{In this paper we are concerned with the generalization of the claim \eqref{statement_ishihara} when $\Omega$ is a smooth bounded domain of $\R^d$ but $\X$ is an abstract metric space, without any (local) compactness or curvature-related restrictions}.
We also establish several generalized maximum principles for the harmonic pullback.
These read roughly speaking as
\begin{equation}
\label{eq:max_principle_informal}
\| \E(u) \|_{L^p(\Omega)} \leq C \| \E(u) \|_{L^q(\partial \Omega)}
\end{equation}
for some $p\geq q$, with $C$ depending only on $p,q$, and $\Omega$.
In such control, the finiteness of the left-hand side is a consequence of that of the right-hand side. The classical maximum principle corresponds to $p = q = + \infty$ and $C = 1$.

At this level of generality, there exist various ways of defining harmonicity and even convexity. For the latter we impose the stronger condition that $\E$ is not only convex, but moreover generates an $\EVI_0$-gradient flow in the sense of \cite{MS20}.
This encompasses the cases of geodesically convex functions on NPC spaces, geodesically convex functions on $\mathrm{RCD}(K,\infty)$ spaces, and also for a large class of entropy functionals over the Wasserstein space $\X = \mathcal{P}_2(Y)$ where $Y$ is a complete geodesic metric space, see \cite[Section 3.4]{MS20} for precise definitions and statements in all theses cases.
Moreover, according to \cite[Theorem 3.15]{MS20}, in a forthcoming work it will be shown that geodesically convex functions on complete PC metric spaces always generate an $\EVI_0$-gradient flow.
In short, the gap between our assumption (that $\E$ generates an $\EVI_0$-gradient flow) and the geodesic convexity of $\E$ is tiny and subtle.
On the other hand, for the notion of harmonicity we will adopt a purely \emph{variational} approach and look only at maps $u : \Omega \to \X$ which are \emph{global} minimizers of a Dirichlet energy in the spirit of Korevaar and Schoen \cite{KS93}. Note that without additional assumptions on the target space $\X$, such as negative curvature, harmonic maps with prescribed boundary conditions are not expected to be unique.

The main strategy will be to use the gradient flow generated by $\E$ in order to suitably perturb the map $u$, and then exploit quantitative information from the defect of optimality of this perturbation.
The most technical part comes from modifying the function $u$ up to the boundary of $\Omega$ in order to prove \eqref{eq:max_principle_informal}. 
The technique of proof (using an $\EVI_0$-gradient flow to perturb a metric-valued map) was first used by the second author and Baradat in \cite{baradat2020small} when $\Omega$ is one-dimensional and $\E$ is the Boltzmann entropy on the Wasserstein space, based on probabilistic arguments for fluid-mechanical applications.
Again for one-dimensional domains $\Omega=[0,1]$, a similar approach was employed by some of the authors in the setting when $\X$ is the non-commutative Fisher-Rao space or, more generally, an abstract metric space, cf.\ \cite{MV20,monsaingeon2020dynamical}.
Our strategy here can be extended to \emph{local} minimizers of the Dirichlet energy, see Remark~\ref{rk:local_min}, but would fail when leaving the variational framework (that is, if considering arbitrary critical points of the Dirichlet energy).

\begin{rmk}[Brenier's conjectures] 
	Brenier \cite{B01} first conjectured that the pullback of the Boltzmann entropy acting on the Wasserstein space by a harmonic map is subharmonic, and gave a formal proof of this claim.
	As a vague corollary, he surmised the following second conjecture, cf.\ \cite[Conjecture 3.1]{B01}. Take $u : \Omega \to \mathcal P_2(D)$ a harmonic map valued in the Wasserstein space (see \cite{HL19} which justifies that the notion of harmonic map in \cite{B01} coincides with the one of the present work). Brenier was expecting that if the measures $u(x)$ for $x \in \partial \Omega$ are absolutely continuous with respect to the Lebesgue measure, then it is also the case for $u(y)$ with $y \in \Omega$. 
	Taking $\E$ to be any displacement convex internal energy \cite{Villani03}, e.g. the Boltzmann or the R\'enyi--Tsallis entropy, we give here a positive answer (for a.e. $y\in \Omega$) to Brenier's second conjecture under the assumption that $\E(u) \in L^1(\partial \Omega)$, see our Theorem~\ref{thm:main}. 
	In particular, Theorem~\ref{thm:main} also settles the first conjecture in full generality, improving the partial result of \cite[Theorem 6.3]{HL19}.
	Let us point out that Brenier defined probability-measure-valued harmonic functions by generalizing the Monge-Kantorovich optimal transport, but it has been shown in \cite{HL19} that his definition is compatible with the abstract framework that we are adopting here. 
\end{rmk}

The paper is organized as follows.
In Section \ref{s2}, we rigorously specify our framework and state the main results.
Our most fundamental conclusion is Theorem \ref{thm:main}.
It not only implies the subharmonicity of the pullback inside $\Omega$, but also deals with the boundary values and leads to the corresponding $L^1$, $L^p$ and $L^\infty$ maximum principles, see respectively Theorems~\ref{thm:main}, ~\ref{rk:Lpmax}, ~\ref{th:Linfty}.
In Section \ref{s3}, we discuss the properties of $\EVI_0$-gradient flows, harmonic maps and Dirichlet energies to be used throughout.
The proofs are postponed to Section \ref{s4}.
Appendix finally \ref{ap1} contains two technical real-analysis lemmas.

\section{Main statements}
\label{s2}

\subsection*{Setting}
Let us start by fixing all the assumptions on the main objects of our study, mainly the domain $\Omega$, the target $\X$, and the functional $\E$.

If $a \in \R$ we denote by $a^+ = \max\{a,0\}$ and $a^- = \max\{-a,0\}$ its positive and negative part, respectively.

We always assume for simplicity that $\Omega\subset \R^d$ is an open bounded domain of class at least $C^{2,\alpha}$, see Remark~\ref{rk:domain_regularity} for a comment about domains with less regular boundaries.
We endow $\Omega$ with its Lebesgue measure and its boundary $\partial \Omega$ with the $\mathcal{H}^{d-1}$ measure. 

As concerns the target space and the functional, let $(\X,\sfd)$ be a complete metric space and let $\E : \X \to \R \cup \{+\infty\}$ be lower semicontinuous with dense domain, i.e.\ $\overline{\operatorname D(\E)} = \X$, where $\operatorname D(\E) = \{ u \in \X \,:\, \E(x) < + \infty \}$. 
We do \emph{not} assume that $\E$ is bounded from below.
Moreover, we assume that $\E$ generates an $\EVI_0$-gradient flow for any initial condition $u \in \X$, in the sense of \cite{AmbrosioGigliSavare08}.
This means that there exists an everywhere-defined $1$-parameter semigroup $(\SS_t)_{t \geq 0}$ such that $t \mapsto \SS_t u$ belongs to $AC_\loc((0,\infty),\X) \cap C([0,\infty), \X)$ and
\begin{equation}
\label{eq:EVI0}
\tag{$\EVI$}
\frac{1}{2}\ddt \sfd^2(\SS_t u,v) + \E(\SS_t u) \leq \E(v), \qquad\forall v \in \X,\,\textrm{a.e. } t>0.
\end{equation}
In particular, this implies that $\E$ is geodesically convex \cite{DaneriSavare08,monsaingeon2020dynamical}.

\medskip

Let us also provide the crucial definition for the understanding of the statements of the main results: the notion of harmonic map.
This requires introducing first the space of metric-valued Sobolev functions, the Dirichlet energy, and the notion of trace.

The space $L^2(\Omega,\X)$ consists of (equivalence classes up to a.e.\ equality of) Borel maps $u : \Omega \to \X$ with separable essential range, i.e.\ $u(\Omega \setminus N) \subset \X$ is separable for some $N \subset \Omega$ with $|N|=0$, such that $\sfd(u(\cdot),v) \in L^2(\Omega,\R)$ for some (hence for any, since $|\Omega| < \infty$) $v \in \X$. For $u \in L^2(\Omega,\X)$ the $\eps$-approximate Dirichlet energy is defined as
\begin{equation}\label{eq:dirichlet_eps}
\Dir_\eps(u) := \frac{1}{2 C_d \eps^{d+2}}\iint_{\Omega \times \Omega} \sfd^2(u(x),u(y))\mathds{1}_{|x-y| \leq \eps}\,\d x\d y
,\qquad C_d := \frac{1}{d} \int_{B_1} |z|^2 \,\rd z, 
\end{equation}
and the Dirichlet energy as its limit as $\eps \downarrow 0$ (which always exists, being possibly $+\infty$, see \cite[Theorem 4]{Chiron07}), namely
\begin{equation}
\label{eq:dirichlet}
\Dir(u) := \lim_{\eps \to 0}\Dir_\eps(u).
\end{equation}
The space $H^1(\Omega,\X)$ is then defined as $\{u \in L^2(\Omega,\X) \,:\, \Dir(u) < \infty\}$ and, as shown in \cite[Theorem 1.12.2]{KS93}, there exists a well-defined trace from $H^1(\Omega,\X)$ into $L^2(\partial\Omega,\X)$ that we shall denote by
\begin{equation}\label{eq:trace-op}
\begin{array}{ccc}
H^1(\Omega,\X) & \to & L^2(\partial\Omega,\X)\\
u & \mapsto & u^b
\end{array}.
\end{equation}
This is built as follows: given a vector field $\Z$ transversal to $\partial\Omega$, let $(x_t)_t$ be the flow induced by $\Z$ starting at $x_0 \in \partial\Omega$, i.e.\ the unique solution to $\dot{x}_t = \Z(x_t)$ with $x|_{t=0} = x_0$; then $u$ admits a representative such that, for a.e.\ $x_0\in\partial\Omega$, the map $t \mapsto u(x_t)$ is H\"older continuous and thus $u^b(x_0) := \lim_{t \downarrow 0} u(x_t)$ exists. It turns out that this construction depends neither on the choice of the representative nor on the transversal field $\Z$. Given any $\varphi \in H^1(\Omega,\X)$, we set $H^1_\varphi(\Omega,\X) := \{u \in H^1(\Omega,\X) \,:\, u^b = \varphi^b\}$.

\begin{rmk}
In the case $(\X,\sfd)$ is a smooth Riemannian manifold, the Dirichlet energy is nothing but the $L^2$-norm of the differential of $u$
\[
\Dir(u) = \frac{1}{2} \int_\Omega \|\d u(x)\|_{\operatorname F}^2\,\d x,
\]
being $\| \cdot \|_{\operatorname F}$ the Frobenius (a.k.a.\ Hilbert-Schmidt) norm.
Moreover, for a smooth $\E$, the existence of an $\EVI_0$-gradient flow of $\E$ simply means that $\E$ is convex, and in this case the curve $t \mapsto \SS_t u = \gamma_t$ is the solution to the gradient flow equation $\dot{\gamma}_t = -\nabla\E(\gamma_t)$ with initial condition $\gamma|_{t=0} = u$.
\end{rmk}

\begin{defi} \label{d:har} A map $u \in H^1(\Omega,\X)$ %with boundary trace $u^b$ 
is said to be \emph{harmonic} if it is a minimizer of the Dirichlet problem
\[
\inf_{v \in H^1_u(\Omega,\X)} \Dir(v).
\] 
\end{defi}

\begin{rmk} 
There exist many ways to define harmonic maps. In particular, those maps might be merely required to be locally minimizing, or just be critical points of the Dirichlet energy, etc., leading to different qualitative behavior already when $(\X,\sfd)$ is a smooth Riemannian manifold~\cite{helein2008harmonic}.
In this paper we consider only harmonicity in the sense of Definition \ref{d:har} (see however Remark~\ref{rk:local_min} for an easy weakening), which can be seen as the most restrictive notion.
For the ease of exposition we shall simply talk of harmonic maps.
\end{rmk}

\subsection*{The main results}

Our first main result is the Ishihara property for harmonic maps as defined above. 

\begin{theo}[Ishihara property]
\label{thm:main}
Let $u \in H^1(\Omega,\X)$ be harmonic with boundary value $u^b$ such that $\E(u^b)\in L^1(\partial \Omega)$.
Then $\E(u)\in L^1(\Omega)$ and
\begin{equation}
\label{eq:control_energy_entropy_improved_harmonic}
\int_\Omega(-\Delta \varphi)(x)\E(u(x))\,\d x 
\leq 
\int_{\partial\Omega}\left(-\frac{\partial\varphi}{\partial\sfn}\right)\E( u^b)\,\d\sigma
\quad\forall \varphi\in C^2(\overline{\Omega}),
\,\varphi\geq 0,\,\varphi|_{\partial\Omega}=0,
\end{equation}
where $\sigma := \mathcal{H}^{d-1}\mrest \partial\Omega$ and $\sfn$ is the outward unit normal to $\partial\Omega$. In particular, $\E(u)$ is subharmonic in the distributional sense.
\end{theo}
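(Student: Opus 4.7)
My strategy is to use the $\EVI_0$-gradient flow $(\SS_t)_{t\ge 0}$ generated by $\E$ as a variational perturbation of the harmonic map $u$, thereby converting its minimality into the subharmonicity of $\E\circ u$. The formal intuition, from the smooth Riemannian setting, is that the directional derivative of $\Dir$ at $u$ along the gradient-flow direction $-\varphi\nabla\E(u)$ reads $-\int_\Omega\nabla\varphi\cdot\nabla(\E\circ u)\,\d x$ plus a nonnegative Hessian term killed by convexity of $\E$; harmonicity forces this quantity to be $\ge 0$, and integration by parts with $\varphi|_{\partial\Omega}=0$ then produces \eqref{eq:control_energy_entropy_improved_harmonic}.

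\paragraph{EVI perturbation and pointwise estimate.}
Rigorously, given $\varphi\in C^2(\overline\Omega)$ nonnegative with $\varphi|_{\partial\Omega}=0$ and $h>0$, I would set $u_h(x):=\SS_{h\varphi(x)}u(x)$. Since $\varphi|_{\partial\Omega}=0$ and $\SS_0=\mathrm{Id}$, the trace of $u_h$ coincides with $u^b$, so $u_h$ is admissible and harmonicity gives $\Dir(u_h)\ge\Dir(u)$. The core pointwise estimate is obtained by applying the integrated $\EVI_0$ inequality $\sfd^2(\SS_t a,v)+2t\E(\SS_t a)\le\sfd^2(a,v)+2t\E(v)$ twice, with $(a,t,v)=(u(x),h\varphi(x),u_h(y))$ and $(u(y),h\varphi(y),u(x))$, concatenating to eliminate the hybrid $\sfd^2(u(x),u_h(y))$, and symmetrizing in $x\leftrightarrow y$, to yield
\[
\sfd^2(u_h(x),u_h(y))-\sfd^2(u(x),u(y))\;\le\;h\varphi(x)\bigl[\E(u(y))+\E(u_h(y))-2\E(u_h(x))\bigr]+h\varphi(y)\bigl[\E(u(x))+\E(u_h(x))-2\E(u_h(y))\bigr].
\]
A short algebraic rearrangement splits the RHS into a \emph{diagonal} piece $h[\varphi(x)(\E(u(x))-\E(u_h(x)))+\varphi(y)(\E(u(y))-\E(u_h(y)))]$ (nonnegative by energy dissipation along the flow) and an \emph{off-diagonal} piece $-h[\varphi(x)-\varphi(y)]\cdot[(\E(u)+\E(u_h))(x)-(\E(u)+\E(u_h))(y)]$, the latter being genuinely of order $|x-y|^2$ for smooth $\varphi$.

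\paragraph{Integration, limits, and main obstacle.}
Integrating against the approximate Dirichlet kernel $\mathds{1}_{|x-y|\le\eps}/(2C_d\eps^{d+2})$ and letting $\eps\to 0$, the off-diagonal piece converges, by standard Korevaar--Schoen bilinear-form arguments and integration by parts (using $\varphi|_{\partial\Omega}=0$ and the trace identity $u_h^b=u^b$), to $\tfrac{h}{2}\int_\Omega\Delta\varphi\cdot[\E(u)+\E(u_h)]\,\d x-h\int_{\partial\Omega}\partial_\sfn\varphi\cdot\E(u^b)\,\d\sigma$. Combined with $\Dir(u_h)\ge\Dir(u)$ and the convergence $\E(u_h)\to\E(u)$ as $h\to 0$ (by semigroup continuity), dividing by $h>0$ and sending $h\to 0$ yields \eqref{eq:control_energy_entropy_improved_harmonic}; the $L^1$-integrability of $\E\circ u$ then follows from the finiteness of the right-hand side. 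The \emph{chief obstacle} is the diagonal piece, which after integration carries a singular $O(1/\eps^2)$ prefactor (coming from $\int\mathds{1}_{|x-y|\le\eps}\,\d y/(2C_d\eps^{d+2})\sim 1/\eps^2$) and reflects that the pointwise EVI bound is not sharp on the diagonal $x=y$. Absorbing this term demands a careful coupled passage $(h,\eps)\to(0,0)$ together with a sharp quantitative dissipation estimate of the form $\E(u)-\E(u_h)=O(h)$ in $L^1(\varphi\,\d x)$. Secondary subtleties include the rigorous justification of $u_h^b=u^b$ near $\partial\Omega$ where $\E\circ u$ may blow up, and the validity of the integration by parts at the low regularity of $\E\circ u$ available.
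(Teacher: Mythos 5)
Your overall architecture is right — perturb $u$ along the $\EVI_0$-semigroup by $u_h(x)=\SS_{h\varphi(x)}u(x)$, get a pointwise estimate from the $\EVI_0$ inequality, integrate against the $\eps$-approximate Dirichlet kernel, send $\eps\to 0$ and then $h\to 0$ — and this is indeed the paper's strategy. However, the pointwise inequality you derive is not sharp enough, and this is not a routine nuisance to be absorbed by a coupled limit: it is the place where the argument actually fails and where the paper needs a new idea.

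Your estimate, obtained by two applications of the static integrated $\EVI_0$, contains the \emph{diagonal} piece $h[\varphi(x)(\E(u(x))-\E(u_h(x)))+\varphi(y)(\E(u(y))-\E(u_h(y)))]$, which is sign-definite (nonnegative by dissipation) and of no particular smallness in $|x-y|$. When you integrate it against $\mathds{1}_{|x-y|\le\eps}/(2C_d\eps^{d+2})$, the $y$-integral contributes $\sim\eps^d$ and you are left with $\sim h/\eps^2$: divergent as $\eps\to 0$. Since it sits on the wrong side of the inequality (an upper bound on $\Dir_\eps(u_h)-\Dir_\eps(u)$), you cannot drop it, and the limit inequality degenerates to $0\le+\infty$. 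Your proposed fix — a coupled $(h,\eps)\to(0,0)$ with a quantitative $\E(u)-\E(u_h)=O(h)$ dissipation bound — would require an a priori slope bound $|\partial\E|(u(x))$ that is precisely unavailable here (the paper only obtains such bounds \emph{after} regularizing in time, not before), so the diagonal term is a genuine obstruction, not a technicality.

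The paper avoids the diagonal entirely by a sharper two-point estimate (Lemma~\ref{lem:fundamental}): for $v_i=\SS_{t_i}u_i$,
\[
\tfrac 12\sfd^2(v_1,v_2)+(t_1-t_2)\bigl(\E(v_1)-\E(v_2)\bigr)\le\tfrac 12\sfd^2(u_1,u_2),
\]
whose only correction term is already a product of two increments and thus manifestly $O(|x-y|^2)$ when $t_i=\delta+\phi(x_i)$. Crucially, this is not obtained by plugging the static $\EVI_0$ in twice and symmetrizing (that is exactly what you did and it provably yields a strictly weaker bound: your right-hand side minus Lemma~\ref{lem:fundamental}'s equals $h[\varphi(x)D(y)+\varphi(y)D(x)]\ge 0$ with $D=\E(u)-\E(u_h)\ge 0$); it is obtained by integrating the \emph{differential} form of \eqref{eq:EVI0} in time from $t_1$ to $t_2$ and then invoking the monotonicity of $t\mapsto\E(\SS_t u)$ to replace the time-integral of the entropy by its endpoint value. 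That $\int_{t_1}^{t_2}\E(\SS_tu_2)\,\d t\ge(t_2-t_1)\E(v_2)$ is the entire trick; it costs nothing and deletes your diagonal term at the root.

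Two further items that your sketch treats as ``secondary subtleties'' are in fact load-bearing and handled in the paper with separate arguments. First, the integration by parts in $\Omega$ requires $\E\circ\tilde u\in W^{1,1}(\Omega)$; the paper inserts a uniform time-shift $\delta>0$ (working with $\SS_{\delta+\phi(x)}u(x)$) so that the regularizing estimates \eqref{eq:regularization}–\eqref{eq:regularization-slope} yield an $L^2$ Haj{\l}asz upper gradient and the correct boundary trace (Lemma~\ref{lem:E_W11}), and only then removes $\delta$ by monotone convergence. Second, the $L^1$-integrability of $\E\circ u$ is not a consequence of the limit inequality alone: it is established first (Corollary~\ref{cor:E_is_L1}) by testing with the superharmonic $\psi$ solving $-\Delta\psi=1$, $\psi|_{\partial\Omega}=0$, precisely because this test function has the right sign to use monotone convergence without any prior integrability. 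Only after that can the final limit $\lambda\to 0$ with an arbitrary $\varphi\ge 0$ be justified via dominated convergence.
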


The identity \eqref{eq:control_energy_entropy_improved_harmonic} can be interpreted as
\[
\begin{cases}
 -\Delta \E(u) \leq 0 & \mbox{in }\Omega,\\
 \E(u)\leq \E(u^b) & \mbox{on }\partial\Omega
\end{cases}
\]
in the weak sense, and it enables us to prove some generalized maximum principles. 

Primarily, we derive a ``classical'' $L^\infty$-maximum principle estimate. 

\begin{theo}[$L^\infty$-maximum principle]
\label{th:Linfty}
Let $u \in H^1(\Omega,\X)$ be harmonic with boundary value $u^b$ such that $\E(u^b)\in L^1(\partial\Omega)$.
Then
\[
\esssup\limits_{x\in\Omega}\E(u(x))
\leq \esssup\limits_{x\in\partial\Omega} \E(u^b(x)),
\]
and the left-hand side is finite if and only if the right-hand side is.
\end{theo}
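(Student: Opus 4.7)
The plan is to derive the $L^\infty$-bound directly from the Ishihara inequality \eqref{eq:control_energy_entropy_improved_harmonic} of Theorem~\ref{thm:main} by plugging in well-chosen test functions $\varphi$, and then handle the ``iff'' at the level of the boundary by lower semicontinuity. Set $M := \esssup_{\partial\Omega}\E(u^b)$. If $M = +\infty$ the inequality $\esssup_\Omega\E(u)\leq M$ is trivial, so I may assume $M < +\infty$. Since $\partial\Omega$ has finite $\mathcal H^{d-1}$-measure, the pointwise bound $\E(u^b)\leq M$ $\sigma$-a.e.\ forces $\E(u^b)\in L^\infty(\partial\Omega)\subset L^1(\partial\Omega)$, so Theorem~\ref{thm:main} applies and in particular $\E(u)\in L^1(\Omega)$.

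Given any nonnegative $f\in C^\infty_c(\Omega)$, I would let $\varphi$ solve the Poisson problem $-\Delta\varphi = f$ in $\Omega$, $\varphi=0$ on $\partial\Omega$. Since $\partial\Omega$ is of class $C^{2,\alpha}$, classical Schauder theory yields $\varphi \in C^{2,\alpha}(\overline\Omega) \subset C^2(\overline\Omega)$; the weak maximum principle gives $\varphi \geq 0$ in $\Omega$, and because $\varphi$ attains its minimum along $\partial\Omega$ one has $-\partial\varphi/\partial\sfn \geq 0$ there. Inserting such $\varphi$ into \eqref{eq:control_energy_entropy_improved_harmonic}, bounding $\E(u^b)\leq M$ on the boundary, and using the divergence identity $\int_{\partial\Omega}(-\partial\varphi/\partial\sfn)\,\d\sigma = -\int_\Omega\Delta\varphi\,\d x = \int_\Omega f\,\d x$, one obtains
\[
\int_\Omega f\,\E(u)\,\d x \;\leq\; M \int_{\partial\Omega}\Bigl(-\frac{\partial\varphi}{\partial\sfn}\Bigr)\d\sigma \;=\; M \int_\Omega f\,\d x.
\]
Since this holds for every nonnegative $f\in C^\infty_c(\Omega)$ and $\E(u)\in L^1(\Omega)$, I conclude $\E(u)\leq M$ a.e.\ in $\Omega$, which is the claimed inequality.

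It remains to check that $M':=\esssup_\Omega\E(u)<+\infty$ implies $\esssup_{\partial\Omega}\E(u^b)<+\infty$, for the ``iff'' statement. I would recall the trace construction $u^b(x_0)=\lim_{t\downarrow 0}u(x_t)$ along the flow of a transversal smooth vector field $\Z$: this flow is a $C^1$-diffeomorphism from $\partial\Omega\times(0,t_0)$ onto a one-sided tubular neighborhood of $\partial\Omega$. Applied to the Lebesgue-negligible set $\{x\in\Omega:\E(u(x))>M'\}$, Fubini's theorem then shows that for $\sigma$-a.e.\ $x_0\in\partial\Omega$ there exists a sequence $t_n\downarrow 0$ with $\E(u(x_{t_n}))\leq M'$. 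Lower semicontinuity of $\E$ yields $\E(u^b(x_0))\leq \liminf_n \E(u(x_{t_n}))\leq M'$ for $\sigma$-a.e.\ $x_0$, whence $\esssup_{\partial\Omega}\E(u^b)\leq M'$.

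I anticipate no serious obstacle: Theorem~\ref{thm:main} has already absorbed all of the subtle analysis, and what remains is standard elliptic PDE together with a short trace/Fubini argument. The only mildly delicate point is that last paragraph, where one must remember that $\esssup_\Omega\E(u)$ controls $\E(u)$ only up to a Lebesgue-null set, and so one cannot directly evaluate it on the $\mathcal H^{d-1}$-negligible boundary; Fubini along the transversal flow is the standard device to bridge this gap.
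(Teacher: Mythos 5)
Your argument for the inequality is essentially the paper's: both argue by duality, solving $-\Delta\varphi = f$ with zero boundary data and exploiting $\int_{\partial\Omega}(-\partial\varphi/\partial\sfn)\,\d\sigma = \int_\Omega f\,\d x$. Your version is slightly cleaner, because by bounding $\E(u^b)\leq M$ pointwise under the boundary integral (legitimate since $-\partial\varphi/\partial\sfn\geq 0$) you avoid the paper's detour through $\|\E(u^b)^+\|_{L^\infty}$ and the attendant case split according to the sign of $\esssup_{\partial\Omega}\E(u^b)$. One small slip: the assertion that $\E(u^b)\leq M$ $\sigma$-a.e.\ ``forces $\E(u^b)\in L^\infty(\partial\Omega)$'' is not correct as stated, since $\E(u^b)$ may a priori be unbounded below; but this is harmless, as $\E(u^b)\in L^1(\partial\Omega)$ is already a hypothesis and that is all Theorem~\ref{thm:main} needs.

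The last paragraph is a genuine addition: the paper's own proof establishes $\esssup_\Omega\E(u)\leq\esssup_{\partial\Omega}\E(u^b)$ (which yields ``RHS finite $\Rightarrow$ LHS finite''), but does not visibly argue the converse implication ``$\esssup_\Omega\E(u)<\infty\Rightarrow\esssup_{\partial\Omega}\E(u^b)<\infty$''. Your argument — pulling the Lebesgue-null set $\{\E(u)>M'\}$ back through the collar diffeomorphism, applying Fubini to get a sequence $t_n\downarrow 0$ along each a.e.\ flow line with $\E(u(x_{t_n}))\leq M'$, and concluding by lower semicontinuity of $\E$ together with $u(x_{t_n})\to u^b(x_0)$ — is correct and exactly the right device to pass a Lebesgue-a.e.\ interior bound to a $\sigma$-a.e.\ boundary bound. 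In short: same core method as the paper, marginally tidier on signs, and more complete on the ``if and only if''.
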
 

Ultimately, leveraging the classical theory of elliptic regularity, we get the refinement of Theorem \ref{thm:main} that was informally anticipated in \eqref{eq:max_principle_informal}. 

\begin{theo}[Gain of $L^p$-regularity]
\label{rk:Lpmax}
Let $u \in H^1(\Omega,\X)$ be harmonic with boundary value $u^b$ such that $\E(u^b)^+\in L^{q}(\partial \Omega)$, $1<q<\infty$.
Then $\E(u)^+\in L^{p}(\Omega)$ with the explicit exponent $p=\frac {dq} {d-1}$. Moreover, if $q=1$ (exactly as in Theorem ~\ref{thm:main}), then $\E(u)^+\in L^{p}(\Omega)$ for any $p<\frac d {d-1}$. 
\end{theo}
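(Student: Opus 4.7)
The plan is a duality argument: Theorem \ref{thm:main} asserts that $\E(u)$ is a distributional subsolution of the Laplace equation with boundary datum controlled by $\E(u^b)$, and I will test this weak inequality against solutions of a Poisson problem with zero Dirichlet data in order to extract the $L^p$-bound. Concretely, write $p' = p/(p-1)$ and $q' = q/(q-1)$ for the Hölder-conjugate exponents. Given any nonnegative $f \in L^{p'}(\Omega)$, solve
\[
\begin{cases}
-\Delta \varphi = f & \text{in } \Omega, \\
\varphi = 0 & \text{on } \partial \Omega.
\end{cases}
\]
The weak maximum principle yields $\varphi \geq 0$ in $\Omega$, whence $-\partial \varphi/\partial \sfn \geq 0$ on $\partial\Omega$. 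After approximation by $C^2(\overline\Omega)$ functions (routine since $\partial\Omega \in C^{2,\alpha}$), $\varphi$ becomes admissible in \eqref{eq:control_energy_entropy_improved_harmonic}, and majorising $\E(u^b)$ by $\E(u^b)^+$ gives
\[
\int_\Omega f\, \E(u)\, \dx \leq \int_{\partial \Omega}\Bigl(-\frac{\partial \varphi}{\partial \sfn}\Bigr) \E(u^b)^+\, \d\sigma \leq \Bigl\|\frac{\partial \varphi}{\partial \sfn}\Bigr\|_{L^{q'}(\partial\Omega)} \|\E(u^b)^+\|_{L^q(\partial\Omega)}.
\]

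The crux of the argument is then the elliptic estimate $\|\partial \varphi/\partial \sfn\|_{L^{q'}(\partial \Omega)} \leq C \|f\|_{L^{p'}(\Omega)}$. By Calder\'on--Zygmund theory, $\varphi \in W^{2,p'}(\Omega)$ with $\|\varphi\|_{W^{2,p'}} \leq C \|f\|_{L^{p'}}$, so $\nabla \varphi \in W^{1,p'}(\Omega)$ and the trace theorem yields $\nabla \varphi|_{\partial \Omega} \in W^{1-1/p',p'}(\partial\Omega)$. For $1 < p' < d$ the fractional Sobolev embedding on the $(d-1)$-dimensional manifold $\partial \Omega$ reads
\[
W^{1-1/p',p'}(\partial \Omega) \hookrightarrow L^r(\partial \Omega), \qquad r = \frac{(d-1)p'}{d - p'},
\]
and a direct algebraic check shows that $r = q'$ is equivalent to $p = dq/(d-1)$. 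The endpoint $q = 1$, for which $q' = \infty$, is handled instead by the Morrey-type embedding $W^{2,p'}(\Omega) \hookrightarrow C^{1,\alpha}(\overline \Omega)$ valid for any $p' > d$, i.e., $p < d/(d-1)$, yielding $\partial \varphi/\partial \sfn \in L^\infty(\partial\Omega)$ with the appropriate bound. Taking the supremum of the duality inequality over all nonnegative $f \in L^{p'}(\Omega)$ with $\|f\|_{L^{p'}} \leq 1$, and using $\sup\{\int fg\, \dx : f \geq 0,\ \|f\|_{L^{p'}} \leq 1\} = \|g^+\|_{L^p(\Omega)}$, yields $\|\E(u)^+\|_{L^p(\Omega)} \leq C \|\E(u^b)^+\|_{L^q(\partial \Omega)}$, which is the desired conclusion.

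The main obstacle I anticipate is an integrability mismatch: Theorem \ref{thm:main} as stated requires $\E(u^b) \in L^1(\partial\Omega)$, whereas here only $\E(u^b)^+ \in L^q(\partial\Omega)$ is assumed, so a priori $\E(u)$ need not belong to $L^1(\Omega)$ and the left-hand side of the duality identity may fail to make sense. The natural way around this is to establish, either by inspecting the proof of Theorem \ref{thm:main} or by a truncation $\E_n := \E \vee (-n)$ followed by monotone convergence as $n \to \infty$, the one-sided variant of \eqref{eq:control_energy_entropy_improved_harmonic} in which $\E(u)$ and $\E(u^b)$ are replaced by their positive parts throughout; this is natural since $t \mapsto t^+$ is convex and so preserves subharmonicity, and this version suffices to close the duality argument above without any further integrability hypothesis on $\E(u^b)^-$.
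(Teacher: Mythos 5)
Your core argument is the same as the paper's: solve $-\Delta\varphi = f$ with zero Dirichlet data, note $\varphi\geq 0$, plug $\varphi$ into \eqref{eq:control_energy_entropy_improved_harmonic}, bound the Neumann trace $\partial\varphi/\partial\sfn$ in $L^{q'}(\partial\Omega)$ via Calder\'on--Zygmund plus the trace theorem and the Besov/Sobolev embedding on $\partial\Omega$ (with the Morrey endpoint for $q=1$), and close by the duality characterization of $\| \E(u)^+ \|_{L^p}$, which is exactly Lemma~\ref{lem:positive_part} in the paper.

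The obstacle you flag in the last paragraph is, however, illusory. Under the standing assumption that $\E$ generates an $\EVI_0$-gradient flow, $\E$ is linearly bounded from below (inequality~\eqref{eq:linear-lower-bound}), and since $u^b \in L^2(\partial\Omega,\X)$ this gives $\E(u^b)^- \in L^2(\partial\Omega)$ automatically --- this is precisely Lemma~\ref{lem:E_L1} applied with $\Xi=\partial\Omega$, and it is spelled out in Remark~\ref{rmk:E_sublinear}. Consequently $\E(u^b)^+ \in L^q(\partial\Omega)$ with $q\geq 1$ already forces $\E(u^b) \in L^1(\partial\Omega)$, so Theorem~\ref{thm:main} applies directly and no ``one-sided variant'' of \eqref{eq:control_energy_entropy_improved_harmonic} is needed. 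Moreover, the workaround you suggest would be delicate: the truncation $\E \vee (-n)$ need not remain geodesically convex, let alone generate an $\EVI_0$-gradient flow, so that route would require its own justification. And while ``$t \mapsto t^+$ convex preserves subharmonicity'' is true for genuine distributional subharmonic functions inside $\Omega$, the inequality \eqref{eq:control_energy_entropy_improved_harmonic} also encodes the boundary behaviour, and it does not immediately transform under composition with $t^+$. Dropping that final paragraph entirely and invoking Lemma~\ref{lem:E_L1}/Remark~\ref{rmk:E_sublinear} to see $\E(u^b)\in L^1(\partial\Omega)$ makes the argument coincide with the paper's.
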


\begin{rmk}
\label{rmk:E_sublinear} In Theorem~\ref{thm:main}, the conclusion $\E(u) \in L^1(\Omega)$ will be derived from the integrability of $\E(u^b)$, which can be viewed as an $L^1$ maximum principle, cf.\ \eqref{eq:max_principle_informal}. Actually, as detailed later in Lemma~\ref{lem:E_L1}, although $\E$ can be unbounded from below, there always holds $\E(u)^- \in L^2(\Omega)$ and $\E(u^b)^- \in L^2(\partial \Omega)$, this is a direct consequence of $u \in L^2(\Omega,\X)$ and $u^b \in L^2(\partial\Omega,\X)$.
In Theorem \ref{rk:Lpmax} we independently control the positive part of $\E(u)$. In particular, if $d>1$ and $\E(u^b)^+\in L^{1}(\partial \Omega)$, the above considerations yield $\E(u)\in L^{p}(\Omega)$ for any $p<\frac d {d-1}$. 

\end{rmk}

\begin{rmk}
	Even in the case when $\X$ is an NPC space, our analysis slightly improves the existing results.
	In particular, in comparison with \cite{Fu05} we do not assume that $\E$ is continuous, which is crucial for infinite-dimensional applications of the abstract metric theory.  
\end{rmk}

\section{Some useful properties of EVI-gradient flows and Dirichlet energies}
\label{s3}

For the sake of the reader, and as a complement to the essential definitions already provided in the previous section, we collect here all properties concerning $\EVI$-gradient flows, harmonic maps and Dirichlet energies that are required in the sequel.

\subsection*{Properties of EVI-gradient flows} We list now some useful properties of $\EVI$-gradient flows, which hold true under the aforementioned assumptions on $\X$ and $\E$. First of all, the slope of $\E$ (defined as a local object) admits the following global representation
\begin{equation}
\label{eq:local=global}
|\partial\E|(u) := \limsup_{v \to u}\frac{\big(\E(u) - \E(v)\big)^+}{\sfd(u,v)} = \sup_{v \neq u}\frac{\big(\E(u) - \E(v)\big)^+}{\sfd(u,v)},
\end{equation}
provided $u \in \Dom(\E)$, see \cite[Proposition 3.6]{MS20} taking into account our standing assumption that any $u \in \X$ is the starting point of an $\EVI_0$-gradient flow.
This implies in particular that $|\partial\E|:\X \to [0,\infty]$ is lower semicontinuous, since so is the above right-hand side (as a supremum of lower semicontinuous functions).
Moreover, as an easy byproduct of \eqref{eq:local=global} we see that
\begin{equation}\label{eq:doubled-slope}
|\E(u)-\E(v)| \leq \big(|\partial \E|(u)+|\partial \E|(v)\big)\sfd(u,v), \qquad \forall u,v \in \Dom(\E).
\end{equation}
In addition, from \cite[Theorems 2.17 and 3.5]{MS20} we know that:
\begin{enumerate}[label={(\roman*)}]
\item \emph{Contraction}.\ If $(\gamma_t)$ is an $\EVI_0$-gradient flow of $\E$ starting from $u \in \overline{\operatorname D(\E)}$ and $(\tilde{\gamma}_t)$ is a second $\EVI_0$-gradient flow of $\E$ starting from $v \in \overline{\operatorname D(\E)}$, then
\begin{equation}
\label{eq:contraction}
\sfd(\gamma_t,\tilde{\gamma}_t) \leq \sfd(u,v), \qquad \forall t \geq 0.
\end{equation}
This means that EVI-gradient flows are unique (provided they exist) and thus if there exists an EVI-gradient flow $(\gamma_t)$ starting from $u$, then a 1-parameter semigroup $(\SS_t)_{t \geq 0}$ is unambiguously associated to it via $\SS_t(u) = \gamma_t$.

%\item The maps $t \mapsto \gamma_t$ and $t \mapsto \V(\gamma_t)$ are locally Lipschitz in $(0,\infty)$ with values in $\X$ and $\R$, respectively, and satisfy the \emph{Energy Dissipation Equality}
%\begin{equation}\label{eq:speed=slope}
%-\ddt \V(\gamma_t) = \frac{1}{2}|\dot{\gamma}_t|^2 + \frac{1}{2}|\partial \V|^2(\gamma_t) = |\dot{\gamma}_t|^2 = |\partial \V|^2(\gamma_t), \qquad \textrm{for a.e. } t>0.
%\end{equation}

\item \emph{Monotonicity}.\ For any $u \in \X$, the map
\begin{align}
\label{eq:monotonicity-entropy}
& t \mapsto \E(\SS_t u) \quad \textrm{is non-increasing on } [0,\infty); \\
\label{eq:monotonicity-slope}
& t \mapsto |\partial \E|(\SS_t u) \quad \textrm{is non-increasing on } [0,\infty).
\end{align}
%\item For any $u \in \X$, $v \in \operatorname D(\E)$ and $t>0$ it holds
%\begin{equation}\label{eq:regularization}
%\frac{1}{2}\sfd^2(\SS_t u,v) + t\big(\E(\SS_t u) - \E(v)\big) + \frac{t^2}{2}|\partial\E|^2(\SS_t u) \leq \frac{1}{2}\sfd^2(u,v).
%\end{equation}
\item \emph{Regularizing effect for the energy}.\ For any $u \in \X$, $v \in \operatorname D(\E)$ and $t>0$ it holds
\begin{equation}\label{eq:regularization}
\E(\SS_t u)  \leq \E(v) + \frac{1}{2t}\sfd^2(u,v).
\end{equation}
\item \emph{Regularizing effect for the slope}.\ For any $u \in \X$, $v \in \operatorname D(|\partial\E|)$ and $t>0$ it holds
\begin{equation}\label{eq:regularization-slope}
|\partial \E|^2(\SS_t u) \leq |\partial \E|^2(v) + \frac{1}{t^2}\sfd^2(u,v).
\end{equation}
\item \emph{Control of the speed}.
\ For any $u \in \operatorname D(|\partial\E|)$ and $t > 0$ there holds $\sfd(\SS_t u,u) \leq t |\partial\E|(u)$.
We will rather use this in the following weaker form:
if $u \in \X$ and $t_1,t_2 > 0$ then
\begin{equation}\label{eq:asymptotics}
\sfd(\SS_{t_1} u, \SS_{t_2} u) \leq |t_1-t_2| \big( |\partial\E|(\SS_{t_1}u) + |\partial\E|(\SS_{t_2}u)\big).
\end{equation}
\item \emph{Bound from below}.\ $\E$ is linearly bounded from below, namely there exist $v \in \X$, $\alpha,\beta \in \R$ such that
\begin{equation}\label{eq:linear-lower-bound}
\E(u)  \geq \alpha -  \beta\sfd(u,v), \qquad \forall u \in \X.
\end{equation}
\end{enumerate}
A further fundamental estimate for $\EVI_0$-gradient flows is the following.
It can be seen as a refinement of the contractivity property \eqref{eq:contraction}, allowing to compare the distance between two $\EVI_0$-gradient flows at \emph{different} times.

\begin{lem}
\label{lem:fundamental}
For all $u_1,u_2\in \X$ and all $t_1,t_2 > 0$ let $v_1 := \SS_{t_1}u_1$, $v_2 := \SS_{t_2}u_2$. Then
\begin{equation}\label{eq:2_points_estimate}
\frac 12\sfd^2(v_1,v_2) + (t_1-t_2)\left(\E(v_1)-\E(v_2)\right) \leq \frac 12 \sfd^2(u_1,u_2).
\end{equation}
\end{lem}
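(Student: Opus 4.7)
My plan is to reduce the two-flow comparison \eqref{eq:2_points_estimate} to a single-flow estimate by combining the semigroup property with the contraction \eqref{eq:contraction}, and then to derive the single-flow estimate by integrating the defining EVI inequality \eqref{eq:EVI0} and invoking the monotonicity \eqref{eq:monotonicity-entropy}.

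First I observe that the left-hand side of \eqref{eq:2_points_estimate} is invariant under the swap $(u_1,t_1)\leftrightarrow(u_2,t_2)$: the squared distance is symmetric, and the sign flips of $t_1-t_2$ and of $\E(v_1)-\E(v_2)$ cancel. I may therefore assume $t_1\geq t_2$. Setting $\tau:=t_1-t_2\geq 0$ and $w:=\SS_{t_2}u_1$, the semigroup property gives $v_1=\SS_\tau w$, while \eqref{eq:contraction} applied at time $t_2$ yields
\[
\sfd(w,v_2)=\sfd(\SS_{t_2}u_1,\SS_{t_2}u_2)\leq \sfd(u_1,u_2).
\]
Thus it will be enough to prove the single-flow inequality
\[
\tfrac12\sfd^2(\SS_\tau w,v_2)+\tau\bigl(\E(\SS_\tau w)-\E(v_2)\bigr)\leq \tfrac12\sfd^2(w,v_2),
\]
since inserting $v_1=\SS_\tau w$ and $\tau=t_1-t_2$ and using the above contraction bound gives \eqref{eq:2_points_estimate}.

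To establish this single-flow inequality, I would apply \eqref{eq:EVI0} to the curve $s\mapsto\SS_s w$ with fixed test point $v_2$ and integrate from $s=0$ to $s=\tau$; using the local absolute continuity of $s\mapsto\sfd^2(\SS_s w,v_2)$ and the continuity at $s=0$, this gives
\[
\tfrac12\sfd^2(\SS_\tau w,v_2)-\tfrac12\sfd^2(w,v_2)+\int_0^\tau \E(\SS_s w)\,\d s\leq \tau\,\E(v_2).
\]
The monotonicity \eqref{eq:monotonicity-entropy} then implies $\E(\SS_s w)\geq \E(\SS_\tau w)=\E(v_1)$ for every $s\in[0,\tau]$, so the integral is at least $\tau\E(v_1)$, and rearranging yields exactly the desired single-flow bound.

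I do not expect a real obstacle, only a small justification that $\int_0^\tau \E(\SS_s w)\,\d s$ is finite, so that the above integration and lower bound make sense. This is ensured because $t_2>0$ forces $w\in\Dom(\E)$ via the regularizing effect \eqref{eq:regularization}, after which monotonicity gives $\E(\SS_s w)\leq \E(w)<\infty$ on $[0,\tau]$, while the linear lower bound \eqref{eq:linear-lower-bound} together with the continuity of $s\mapsto\SS_s w$ keeps $\E(\SS_s w)$ bounded from below on the same interval.
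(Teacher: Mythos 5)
Your proposal is correct and uses essentially the same argument as the paper: after a WLOG on which $t_i$ is larger, integrate the $\EVI_0$ inequality along one flow against a fixed reference point, invoke the contraction property \eqref{eq:contraction} to control the initial distance and the monotonicity \eqref{eq:monotonicity-entropy} to bound the entropy integral from below. The paper just performs this directly (integrating along $t\mapsto\SS_t u_2$ from $t_1$ to $t_2$ against $v_1$), whereas you reparametrize via $w=\SS_{t_2}u_1$ and state an intermediate single-flow inequality, which is an equivalent and slightly more modular organization of the same computation.
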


\noindent This was proved in \cite{monsaingeon2020dynamical} for $\EVI_\lambda$-gradient flows with arbitrary $\lambda\in \R$ in a slightly different form, for the sake of completeness we include here the full proof in the simpler case $\lambda=0$.
%As a preliminary remark, note that in \eqref{eq:2_points_estimate} $\E(v_1)$ and $\E(v_2)$ may take value $+\infty$. However, this can only occur if $s_1=0$ or $s_2=0$, respectively, since e.g.\ by \eqref{eq:regularization-slope} we see that $\SS_t$ maps $\X$ into $\operatorname D(|\partial\E|) \subset \operatorname D(\E)$ whenever $t>0$. If $\E(v_1)=\E(v_2)=+\infty$, then $s_1=s_2=0$ whence $v_i=\SS_{s_i}u_i=\SS_0 u_i=u_i$ and thanks to our convention $0\times\infty=0$ the left-hand side of \eqref{eq:2_points_estimate} boils down to $\frac 12\sfd^2(u_1,u_2)$, thus coinciding with the right-hand side. If $\E(v_1)<+\infty$ and $\E(v_2)=+\infty$, then $s_2=0$ and without loss of generality we can assume $s_1>0$ (since we have already agreed on the meaning of \eqref{eq:2_points_estimate} when $s_1=s_2=0$); in this case the statement is trivially true, as the left-hand side is $-\infty$ owing to $(s_1-s_2)>0$ and $\E(v_1)-\E(v_2)=-\infty$. The case $\E(v_2)<+\infty$ and $\E(v_1)=+\infty$ is completely analogous. 
 
\begin{proof}
As a preliminary remark, note e.g.\ from \eqref{eq:regularization} that $\SS_t$ maps $\X$ into $\operatorname D(\E)$ whenever $t>0$, so that $\E(v_1)$ and $\E(v_2)$ are both finite and thus \eqref{eq:2_points_estimate} is unambiguous. Moreover, if $t_1=t_2=t$ then \eqref{eq:2_points_estimate} is nothing but the contractivity property \eqref{eq:contraction} thus we only need to establish the claim when $t_1 \neq t_2$.

Consider $t_2>t_1$ (the other case is completely symmetric) and write \eqref{eq:EVI0} for the gradient flow $t\mapsto \SS_t u_2$ with reference point $v=v_1=\SS_{t_1}u_1$ in the form
\[
\frac 12 \frac{\d}{\d t}\sfd^2(\SS_t u_2,v_1) + \E(\SS_t u_2)-\E(v_1)\leq 0.
\]
Integrating from $t=t_1$ to $t=t_2>t_1$ we get
\begin{align*}
\frac 12 \sfd^2(\SS_{t_2} u_2,v_1)  
& +\int _{t_1}^{t_2}
\big(\E(\SS_t u_2) -\E(v_1)\big)\d t
\\
& \leq
\frac 12 \sfd^2(\SS_{t_1} u_2,v_1)  
= \frac 12 \sfd^2(\SS_{t_1} u_2,\SS_{t_1}u_1)  \leq \frac 12 \sfd^2(u_2,u_1),
\end{align*}
where the last inequality follows from \eqref{eq:contraction}. Leveraging now the monotonicity property \eqref{eq:monotonicity-entropy} we see that $\E(\SS_t u_2) \geq \E(\SS_{t_2}u_2)=\E(v_2)$ for $t\leq t_2$.
Whence
\[
\int_{t_1}^{t_2} \big(\E(\SS_t u_2) -\E(v_1)\big)\d t 
\geq 
\int_{t_1}^{t_2} \big(\E(v_2) -\E(v_1)\big)\d t
=
(t_2-t_1)\big(\E(v_2) -\E(v_1)\big)
\]
and plugging this estimate in the previous inequality yields exactly \eqref{eq:2_points_estimate}.
\end{proof}

\subsection*{Korevaar-Schoen theory}
Since the seminal works of N.\ J.\ Korevaar and R.\ M.\ Schoen \cite{KS93}, and J.\ Jost \cite{Jost94} on Sobolev and harmonic maps from Riemannian manifolds into metric spaces, several other (equivalent) approaches have appeared, most notably those of Y.\ G.\ Reshetnyak \cite{Reshetnyak97} and P.\ Haj{\l}asz \cite{Hajlasz09}, and in the last years there has been a surge of interest concerning the generalization of these papers to singular/non-smooth source spaces, including in particular Alexandrov and $\RCD$ spaces.
In this direction it is worth mentioning \cite{KuwaeShioya03, ZhangZhu18, GigTyu20, GigTyu20bis}. 

Our definition of $\eps$-approximate Dirichlet energy is, up to a dimensional factor, a particular case of that introduced in \cite[Section 1.5]{Chiron07}, inspired by \cite{BBM01}, for the explicit choice of (non-renormalized) radial mollifiers $\rho_\eps(x) := \eps^{-(d+2)}|x|^2\mathds{1}_{|x| \leq \eps}$. 
By \cite[Theorem 4]{Chiron07} the limit of $\Dir_\eps(u)$ as $\eps \downarrow 0$ always exists in $[0,\infty]$ for all $u \in L^2(\Omega,\X)$, so that \eqref{eq:dirichlet} is actually meaningful as already mentioned.
Furthermore, by \cite[Theorems 3 and 4]{Chiron07} $\Dir$ coincides with the Dirichlet energies defined in \cite{KS93, Jost94}; thus by \cite[Theorem 1.6.1]{KS93} we know that
\begin{equation}\label{eq:lsc-dirichlet}
\Dir \textrm{ is lower semicontinuous w.r.t.\ the strong topology of } L^2(\Omega,\X),
\end{equation}
where it is worth recalling that this topology is induced by the distance
\begin{equation}\label{eq:L2-distance}
\sfd_{L^2}(u,v) := \Big(\int_\Omega \sfd^2(u(x),v(x))\,\d x\Big)^{1/2}, \qquad u,v \in L^2(\Omega,\X).
\end{equation}
As a consequence of \cite[Theorems 3 and 4]{Chiron07}, the Sobolev space $H^1(\Omega,\X)$ as defined in Section \ref{s2} also coincides with those introduced in \cite{KS93, Jost94}, and by \cite[Proposition 4]{Chiron07} together with \cite[Proposition 5.1.6]{ambrosio2004topics} it also coincides with Reshetnyak's and Haj{\l}asz's definitions.
The equivalence with the latter implies in particular that $u \in H^1(\Omega,\X)$ if and only if there exists a negligible set $N \subset \Omega$ and a non-negative function $g_u \in L^2(\Omega)$ such that
\begin{equation}\label{eq:hajlasz}
\sfd(u(x),u(y)) \leq \big(g_u(x) + g_u(y)\big)|x-y|, \qquad \forall x,y \in \Omega \setminus N.
\end{equation}

\bigskip

\section{Proof of the main results}
\label{s4}

This section is devoted to the proof of Theorems \ref{thm:main}--\ref{rk:Lpmax}. 
%As already mentioned, Theorem~\ref{th:Linfty} and Theorem~\ref{rk:Lpmax} will be deduced from Theorem ~\ref{thm:main} by the mean of dual formulation of $L^p$ norms and classical analysis. 
The non-smooth analysis work is done in the proof of Theorem \ref{thm:main}.
The main idea is to perturb $u$ by looking at the function $\tilde{u}^\lambda = \SS_{\lambda \phi(x)} u(x)$ for some $\phi : \Omega \to [0, + \infty)$ (to be well-chosen):
in other words, we let $u(x)$ follow the gradient flow of $\E$ for a ``time'' $\lambda \phi(x)$ that depends on $x \in \Omega$.
Applying Lemma~\ref{lem:fundamental} at the level of the $\varepsilon$-approximate Dirichlet energies, and then taking the limit $\varepsilon \to 0$, a formal computation reveals that we should be able to estimate $\Dir(\tilde{u}^\lambda)$ as
\begin{equation}
\label{eq:control_dir_informal}
\Dir(\tilde{u}^\lambda) + \lambda \int_\Omega \nabla \phi \cdot \nabla \E(\tilde{u}^\lambda)\,\d x \leq \Dir(u).
\end{equation} 
If $\phi$ vanishes on $\partial \Omega$ then $\tilde{u}^\lambda$ shares the same boundary values as $u$, thus by the minimizing property of $u$, we deduce that $\int_\Omega \nabla \phi \cdot \nabla \E(\tilde{u}^\lambda)\,\d x \leq 0$.
An integration by parts and the limit $\lambda \to 0$ should then readily imply our main result.
However this computation is difficult to justify at once because $\E\circ \tilde{u}^\lambda:\Omega\to\R$ is \emph{a priori} not that smooth.
This is why we add an additional regularization parameter $\delta > 0$ and rather look, say for $\lambda = 1$ fixed, at $\SS_{\delta + \phi(x)} u(x)$:
we ``lift'' everything up in time uniformly by $\delta$.
Owing to the regularizing effects \eqref{eq:regularization} and \eqref{eq:regularization-slope} we prove that the functions $\SS_{\delta + \phi} u$ and $\E(\SS_{\delta + \phi} u)$ are smooth enough to justify our subsequent computations (Lemma~\ref{lem:E_L1} and Lemma~\ref{lem:E_W11}), then we establish the control \eqref{eq:control_dir_informal} of the Dirichlet energy of $\SS_{\delta + \phi} u$ (Proposition~\ref{prop:control_energy_entropy}) and we then take the limit $\delta \to 0$ (Proposition~\ref{prop:phi_superharmonic}) followed by $\lambda \to 0$ (yielding Theorem~\ref{thm:main}).
Eventually, the integrability of $\E(u)$ is obtained by duality: having first established in Proposition~\ref{prop:phi_superharmonic} the estimate \eqref{eq:control_energy_entropy_improved_harmonic} when the test function $\phi$ is superharmonic and relying on monotone convergence, a well-chosen test function yields integrability of $\E$, see Corollary~\ref{cor:E_is_L1}, as well as Theorem~\ref{th:Linfty} and Theorem~\ref{rk:Lpmax} thanks to classical elliptic regularity for the (standard) Poisson equation.
\\

We now start the core of the proof.

\begin{lem}
\label{lem:E_L1}
Let $(\Xi, \mathcal A, \mu)$ be a measure space with $\mu(\Xi)<+\infty$, and $u \in L^2(\Xi,\X)$. Take $\delta > 0$ and $\phi : \Xi \to \R_+$ a non-negative and bounded function. Set 
\[
\tilde u^\delta(x):=\SS_{\delta+\phi(x)}u(x), \qquad \tilde u(x):=\SS_{\phi(x)}u(x).
\]
Then $\tilde u^\delta$ and $\tilde u$ belong to $L^2(\Xi,\X)$. In addition, $\E(u)^-$, $\E(\tilde u)^-$ and $\E(\tilde u^\delta)^-$ belong to $L^2(\Xi)$ while $\E(\tilde{u}^\delta)^+$ belongs to $L^1(\Xi)$. 
\end{lem}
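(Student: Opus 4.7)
The plan is to split the lemma into four claims --- measurability of $\tilde u$ and $\tilde u^\delta$, their $L^2$-integrability, the $L^2$-bound on the negative parts of the three energies, and the $L^1$-bound on the positive part of $\E(\tilde u^\delta)$ --- and to address each using a different piece of the quantitative theory recalled in Section \ref{s3}. The key preliminary step is to fix once and for all a reference point $v_0 \in \Dom(\E) \cap \Dom(|\partial \E|)$: by the standard regularization properties of $\EVI_0$-flows (\eqref{eq:regularization} together with \eqref{eq:regularization-slope}), $\SS_s w$ lies in both domains for every $w \in \X$ and every $s > 0$, hence such a $v_0$ always exists.

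For measurability I would first observe that $(t,w) \mapsto \SS_t w$ is jointly continuous on $[0,\infty)\times \X$: separate continuity in $t$ is built into the definition of an $\EVI_0$-flow, while separate $1$-Lipschitz continuity in $w$ is exactly \eqref{eq:contraction}, and a triangle-inequality argument combines the two. Composing with the Borel map $x \mapsto (\delta+\phi(x), u(x))$ then makes $\tilde u$ and $\tilde u^\delta$ Borel. Separability of the essential ranges follows because, up to a null set, $u$ takes values in a separable subset $Y \subset \X$, and the continuous image $\SS([0,\delta+\|\phi\|_\infty]\times Y)$ of the separable metric space $[0,\delta+\|\phi\|_\infty]\times Y$ is itself separable in $\X$.

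For $L^2$-integrability I would estimate, through the triangle inequality, contractivity \eqref{eq:contraction} and the control-of-speed property $\sfd(\SS_s v_0, v_0) \leq s|\partial\E|(v_0)$,
$$\sfd(\tilde u^\delta(x), v_0) \le \sfd(\SS_{\delta+\phi(x)}u(x),\SS_{\delta+\phi(x)} v_0) + \sfd(\SS_{\delta+\phi(x)} v_0, v_0) \le \sfd(u(x), v_0) + (\delta+\|\phi\|_\infty)|\partial\E|(v_0).$$
The right-hand side is in $L^2(\Xi)$ because $\sfd(u(\cdot), v_0) \in L^2(\Xi)$ by assumption and constants belong to $L^2(\Xi)$ thanks to $\mu(\Xi) < \infty$; the same inequality with $\delta$ replaced by $0$ handles $\tilde u$. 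The bounds on the negative parts then follow immediately from the linear lower bound \eqref{eq:linear-lower-bound}: there exist $v \in \X$ and $\alpha,\beta \in \R$, with $\beta \geq 0$ without loss of generality, such that $\E(w) \geq \alpha - \beta \sfd(w,v)$ for every $w \in \X$, hence $\E(\cdot)^- \leq |\alpha| + \beta \sfd(\cdot, v)$, which is an $L^2(\Xi)$ function when evaluated at $u$, $\tilde u$ or $\tilde u^\delta$.

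Finally, for the positive part, the uniform lower bound $\delta + \phi(x) \geq \delta > 0$ allows one to invoke the regularizing effect \eqref{eq:regularization} with reference point $v_0$:
$$\E(\tilde u^\delta(x)) \le \E(v_0) + \frac{1}{2(\delta + \phi(x))}\sfd^2(u(x), v_0) \le \E(v_0) + \frac{1}{2\delta}\sfd^2(u(x), v_0),$$
whose right-hand side is in $L^1(\Xi)$ since $\sfd(u(\cdot), v_0) \in L^2(\Xi)$ implies $\sfd^2(u(\cdot), v_0) \in L^1(\Xi)$. The one step that carries the real content of the argument is the initial choice of a reference point with simultaneously finite energy \emph{and} finite slope, since without the latter the control-of-speed term in the $L^2$ estimate above cannot be closed; fortunately, this is precisely what the $\EVI_0$-gradient-flow structure provides.
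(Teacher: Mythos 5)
Your proof is correct and follows essentially the same architecture as the paper's (triangle inequality plus contraction for the $L^2$ bound, the linear lower bound \eqref{eq:linear-lower-bound} for the negative parts, the regularizing effect \eqref{eq:regularization} for the positive part). Two small points of comparison are worth noting. First, you explicitly address Borel measurability and separability of the essential range via joint continuity of $(t,w)\mapsto \SS_t w$; the published proof silently skips this, so this is a welcome addition. Second, for the $L^2$ bound you insist on a reference point $v_0\in\Dom(\E)\cap\Dom(|\partial\E|)$ so as to control $\sfd(\SS_{\delta+\phi(x)}v_0,v_0)$ by the speed bound $s\,|\partial\E|(v_0)$; the paper uses a lighter argument, namely that $s\mapsto\SS_s w$ is continuous on the compact interval $[0,\delta+\|\phi\|_\infty]$ and hence $\sfd(\SS_s w,w)$ is uniformly bounded there for \emph{any} fixed $w\in\X$, with no finite-slope requirement. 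So the ``real content'' you identify at the end (the simultaneous finiteness of energy and slope at $v_0$) is not actually needed, though of course your version is also valid. Finally, your application of \eqref{eq:regularization} in the form $\E(\tilde u^\delta(x))\leq\E(v_0)+\tfrac{1}{2\delta}\sfd^2(u(x),v_0)$ is the literal form of that inequality, whereas the paper writes $\sfd^2(\SS_{\delta+\phi(x)}u(x),v)$ in that spot; both put the right-hand side in $L^1(\Xi)$ (via $u\in L^2$ or $\tilde u^\delta\in L^2$, respectively), so nothing is affected.
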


\begin{proof}
Let us first justify the integrability of $\tilde u^\delta$ and $\tilde u$. By triangle inequality and by the contraction estimate \eqref{eq:contraction} with $v = \SS_{\delta + \phi(x)}w$ for some $w \in \X$ fixed,
\begin{align}\label{eq:uniform-bound}
\sfd(\SS_{\delta+\phi(x)}u(x),w) & \leq \sfd(\SS_{\delta+\phi(x)}u(x),\SS_{\delta+\phi(x)}w) + \sfd(\SS_{\delta+\phi(x)}w,w) \nonumber \\ 
& \leq \sfd(u(x),w) +  \sfd(\SS_{\delta+\phi(x)}w,w).
\end{align}
Due to the continuity-in time of the gradient flow, the second term is bounded uniformly in $x \in \Xi$ by a constant that depends only on $\| \phi \|_\infty + \delta$ and $w$, whence $\tilde{u}^\delta \in L^2(\Xi,\X)$. Moreover, nothing prevents here from taking $\delta = 0$, which yields $\tilde u \in L^2(\Xi,\X)$ too.  

In order to justify that $\E(u)^-$, $\E(\tilde u)^-$ and $\E(\tilde u^\delta)^-$ belong to $L^2(\Xi)$ we simply use the lower bound \eqref{eq:linear-lower-bound} and that $u, \tilde u^\delta$ and $\tilde u$ belong to $L^2(\Xi,\X)$.

Eventually, for the integrability of $\E(\tilde{u}^\delta)$ for $\delta > 0$ we observe that \eqref{eq:regularization} implies that,  for any point $v \in \operatorname D(\E)$, we have
\begin{equation*}
\E(\SS_{\delta+\phi(x)} u(x)) \leq \E(v) + \frac{1}{2 (\delta + \phi(x))} \sfd^2(\SS_{\delta + \phi(x)} u(x), v) \leq \E(v) + \frac{1}{2 \delta} \sfd^2(\SS_{\delta + \phi(x)} u(x), v),
\end{equation*} 
and the right-hand side is in $L^1(\Xi)$ since we just established that $\tilde{u}^\delta \in L^2(\Xi,\X)$.
\end{proof}

\begin{lem}
\label{lem:E_W11}
Let $u \in H^1(\Omega,\X)$ with boundary value $u^b$ be given. Fix $\delta>0$ and $\phi \in C^1(\overline{\Omega})$ non-negative with $\phi = 0$ on $\partial\Omega$. Set 
\[
\tilde u^\delta(x):=\SS_{\delta+\phi(x)}u(x),
\qqtext{and}
\tilde E^{\delta}(x):=\E(\tilde u^\delta(x)).
\]
Then $\tilde E^{\delta}\in W^{1,1}(\Omega)$ with boundary trace
\[
(\tilde E^\delta)^b(x)=\E(\SS_\delta u^b(x)),
\hspace{1cm}x\in \partial\Omega.
\]
\end{lem}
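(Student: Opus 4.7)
The plan is to prove the $L^1$ integrability of $\tilde E^\delta$, then to produce a pointwise Hajlasz-type estimate giving $\tilde E^\delta\in W^{1,1}(\Omega)$, and finally to identify the boundary trace through the flow-based construction recalled in \eqref{eq:trace-op}. The $L^1$ part is immediate: apply Lemma~\ref{lem:E_L1} with $\Xi=\Omega$ to obtain $\E(\tilde u^\delta)^+\in L^1(\Omega)$ and $\E(\tilde u^\delta)^-\in L^2(\Omega)\subset L^1(\Omega)$.

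The main work is to control $|\tilde E^\delta(x)-\tilde E^\delta(y)|$. Using the doubled-slope inequality \eqref{eq:doubled-slope} and setting $h(x):=|\partial\E|(\tilde u^\delta(x))$, I get
\[
|\tilde E^\delta(x)-\tilde E^\delta(y)| \leq \bigl(h(x)+h(y)\bigr)\,\sfd\bigl(\tilde u^\delta(x),\tilde u^\delta(y)\bigr).
\]
The monotonicity \eqref{eq:monotonicity-slope} yields $h(x)\leq|\partial\E|(\SS_\delta u(x))$, and the regularizing effect \eqref{eq:regularization-slope} applied at a fixed point $v\in \operatorname D(|\partial\E|)$ shows $|\partial\E|(\SS_\delta u(x))\leq|\partial\E|(v)+\delta^{-1}\sfd(u(x),v)=:H(x)$, which lies in $L^2(\Omega)$ since $u\in L^2(\Omega,\X)$. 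To estimate the distance between $\tilde u^\delta(x)$ and $\tilde u^\delta(y)$, I insert the midpoint $\SS_{\delta+\phi(x)}u(y)$: contraction \eqref{eq:contraction} controls $\sfd(\SS_{\delta+\phi(x)}u(x),\SS_{\delta+\phi(x)}u(y))\leq\sfd(u(x),u(y))$, while the different-times estimate \eqref{eq:asymptotics}, combined with \eqref{eq:monotonicity-slope} to bound the slopes appearing there by $H(y)$, gives
\[
\sfd\bigl(\SS_{\delta+\phi(x)}u(y),\SS_{\delta+\phi(y)}u(y)\bigr)\leq 2\,|\phi(x)-\phi(y)|\,H(y)\leq 2\|\nabla\phi\|_\infty H(y)\,|x-y|.
\]
Symmetrizing, and invoking the Hajlasz characterization \eqref{eq:hajlasz} with $g_u\in L^2(\Omega)$, I arrive at a pointwise bound of the form $|\tilde E^\delta(x)-\tilde E^\delta(y)|\leq\bigl(G(x)+G(y)\bigr)|x-y|$ after absorbing products $(H+H)(g_u+g_u)$ and $(H+H)^2$ by Cauchy-Schwarz/AM-GM into a single $G\in L^1(\Omega)$. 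Hajlasz's embedding of the pointwise Sobolev space $M^{1,1}$ into $W^{1,1}$ on the smooth domain $\Omega$ then gives $\tilde E^\delta\in W^{1,1}(\Omega)$.

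For the trace, fix a transversal vector field $\Z$ and its flow $(x_t)_t$ from $x_0\in\partial\Omega$. For $\sigma$-a.e.\ $x_0$, the trace construction gives $u(x_t)\to u^b(x_0)$, and $\phi(x_t)\to 0$ since $\phi\in C^1(\overline\Omega)$ vanishes on $\partial\Omega$. Using contraction,
\[
\sfd\bigl(\SS_{\phi(x_t)}u(x_t),u^b(x_0)\bigr)\leq\sfd(u(x_t),u^b(x_0))+\sfd\bigl(\SS_{\phi(x_t)}u^b(x_0),u^b(x_0)\bigr)\xrightarrow[t\downarrow 0]{}0,
\]
the second term vanishing by continuity of $t\mapsto\SS_t u^b(x_0)$ at $t=0$. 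The semigroup property rewrites $\tilde u^\delta(x_t)=\SS_\delta\bigl(\SS_{\phi(x_t)}u(x_t)\bigr)$, and the map $\E\circ\SS_\delta:\X\to\R$ is continuous---contraction \eqref{eq:contraction} together with the uniform slope bound on convergent sequences provided by \eqref{eq:regularization-slope} and the doubled-slope inequality \eqref{eq:doubled-slope} turn it into a locally Lipschitz function---so $\tilde E^\delta(x_t)\to\E(\SS_\delta u^b(x_0))$, which identifies the trace as announced. Note that $\E(\SS_\delta u^b)\in L^1(\partial\Omega)$ by another application of Lemma~\ref{lem:E_L1} with $\Xi=\partial\Omega$ and $\phi\equiv 0$.

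The main obstacle I expect is bookkeeping: producing the Hajlasz gradient requires simultaneously controlling two slopes, the Hajlasz function of $u$, and $\|\nabla\phi\|_\infty$, and verifying that all cross-products stay integrable. Everything else is essentially an assembly of the $\EVI_0$-properties already collected in Section~\ref{s3}.
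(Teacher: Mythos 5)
Your proposal matches the paper's proof essentially step for step: the $L^{1}$ part from Lemma~\ref{lem:E_L1}, the Hajlasz-type bound obtained by combining \eqref{eq:doubled-slope}, \eqref{eq:contraction}, \eqref{eq:asymptotics}, \eqref{eq:monotonicity-slope} and \eqref{eq:regularization-slope} with the uniform bound on the slope along the $\delta$-lifted flow, and the identification of the trace through the flow-based construction of \eqref{eq:trace-op}. The only cosmetic differences are that you introduce the common dominating function $H$ explicitly before symmetrizing and factor the trace argument through the continuity of $\E\circ\SS_\delta$, while the paper bounds the boundary difference directly via \eqref{eq:doubled-slope}; both variants are equivalent.
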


\begin{proof}
Our claim that $\tilde E^\delta \in L^1(\Omega)$ is a direct consequence of Lemma~\ref{lem:E_L1}.
We turn next to to the control of the derivative. 
As $u \in H^1(\Omega,\X)$, by \eqref{eq:hajlasz} there exists $g_u \in L^2(\Omega)$ and a negligible set $N \subset \Omega$ such that $\sfd(u(x),u(y)) \leq (g_u(x) + g_u(y)) |x-y| $ for all $x,y \notin N$.
On the other hand, fix an arbitrary $w \in \X$.
Then, due to \eqref{eq:regularization-slope} with $v = \SS_{\delta + \phi(x)} w$, for all $x \in \Omega$ there holds
\begin{align}\label{eq:control_dE_delta}
|\partial \E|^2(\SS_{\delta + \phi(x)} u(x)) & \leq |\partial \E|^2(\SS_{\delta + \phi(x)} w) + \frac{1}{(\delta + \phi(x))^2}\sfd^2(\SS_{\delta + \phi(x)}u(x),v) \nonumber \\
& \leq  |\partial \E|^2(\SS_\delta w) + \frac{1}{\delta^2}\sfd^2(u(x),w), 
\end{align} 
where we used both the contractivity \eqref{eq:contraction} of $\SS$ and the monotonicity of the slope \eqref{eq:monotonicity-slope}.
Defining $g_\E (x)=|\partial \E|(\SS_{\delta + \phi(x)} u(x))$, we see that $g_\E \in L^2(\Omega)$.
We now control the variations of $\tilde{E}^\delta$:
starting from \eqref{eq:doubled-slope} and then using the triangle inequality, one has
\begin{multline*}
|\tilde{E}^\delta(x) - \tilde{E}^\delta(y)|  \leq (g_\E(x) + g_\E(y)) \sfd(\SS_{\delta + \phi(x)} u(x), \SS_{\delta + \phi(y)} u(y)) \\
\leq (g_\E(x) + g_\E(y)) \Big( \sfd(\SS_{\delta + \phi(x)} u(x), \SS_{\delta + \phi(x)} u(y)) + \sfd(\SS_{\delta + \phi(x)} u(y), \SS_{\delta + \phi(y)} u(y)) \Big).
\end{multline*}  
We then use \eqref{eq:contraction} followed by \eqref{eq:hajlasz} to handle the first term, and \eqref{eq:asymptotics} to handle the second one.
We get the existence of a negligible set $N \subset \Omega$ such that for $x,y \in \Omega \setminus N$,
\begin{align*}
|\tilde{E}^\delta(x) - \tilde{E}^\delta(y)| & \leq (g_\E(x) + g_\E(y)) \Big( \sfd(u(x),  u(y)) + |\phi(x) - \phi(y)| (g_\E(x) + g_\E(y)) \Big) \\
&  \leq |x-y| (g_\E(x) + g_\E(y)) \Big( g_u(x) + g_u(y) + \| \nabla \phi \|_\infty (g_\E(x) + g_\E(y)) \Big) \\
& \leq |x-y| (g(x) + g(y))
\end{align*}  
provided we define $g(x) = (1 +  \| \nabla \phi \|_\infty )g_\E(x)^2  + g_u(x)^2$. By the considerations above the function $g$ belongs to $L^1(\Omega)$, and \cite[Lemma 5.1.7]{ambrosio2004topics} enables to conclude to $\nabla \tilde{E}^\delta \in L^1(\Omega)$ (with actually $|\nabla \tilde{E}^\delta(x)|\leq 2g(x)$).

%\begin{align*}
%|\tilde{E}^\delta(x) - \tilde{E}^\delta(y)| & \leq (g_\E(x) + g_\E(y)) \sfd(\SS_{\delta + \phi(x)} u(x), \SS_{\delta + \phi(y)} u(y)) \\
%& \leq  (g_\E(x) + g_\E(y)) \sqrt{\sfd^2(u(x), u(y)) + 2 (\phi(x) - \phi(y)) (\tilde{E}^\delta(x) - \tilde{E}^\delta(y))} \\
%& \leq |x-y| (g_\E(x) + g_\E(y)) \sqrt{(g_u(x) + g_u(y))^2 + 2 \| \nabla \phi \|_\infty  (|\tilde{E}^\delta(x)| + |\tilde{E}^\delta(y)|)}.
%\end{align*}  
%Using the subadditivity of the square root, we can reorganize the terms as
%\[
%\begin{split}
%|\tilde{E}^\delta(x) - \tilde{E}^\delta(y)| & \leq |x-y| (g_\E(x) + g_\E(y)) \left[ g_u(x) + g_u(y) + \sqrt{2} \left( \sqrt{|\tilde{E}^\delta|}(x) + \sqrt{|\tilde{E}^\delta|}(y)| \right) \right] \\
%& \leq |x-y| (g(x) + g(y)),
%\end{split}
%\]
%provided we define $g = (1+\sqrt{2})g_\E^2 + g_u^2 + \sqrt{2} |\tilde{E}^\delta|$. By the considerations above the function $g$ belongs to $L^1(\Omega)$, and \cite[Lemma 5.1.7]{ambrosio2004topics} enables to conclude to $\nabla \tilde{E}^\delta \in L^1(\Omega)$, actually it is bounded by $2g$.

Lastly, we need to justify the boundary conditions. Let us take $\Z$ a smooth vector field transverse to $\partial \Omega$. As in Section~\ref{s2} we denote by $x_t$ the solution to $\dot{x}_t = \Z(x_t)$ starting from $x_0 \in \partial \Omega$. Fixing $x_0 \in \partial \Omega$ such that $(u(x_t))_{t \geq 0}$ is continuous and converges to $u^b(x_0)$ as $t \to 0$, it suffices to show that $\tilde{E}^\delta(x_t)$ converges to $\E(\SS_\delta u^b(x_0))$ as $t \to 0$. As $(u(x_t))_{t \geq 0}$ is continuous, it is bounded at least for $t \leq 1$ thus \eqref{eq:control_dE_delta} yields that $g_\E$ is bounded on the image of $(u(x_t))_{t \in [0,1]}$. Let us denote by $C$ the upper bound. We simply use \eqref{eq:doubled-slope} and then the triangle inequality, and finally the contraction property \eqref{eq:contraction} to estimate
\begin{align*}
| \tilde{E}^\delta(x_t) & - \E(\SS_\delta u^b(x_0)) | \leq 2 C  \sfd(\SS_{\delta + \phi(x_t)} u(x_t), \SS_{\delta} u^b(x_0) ) 
\\
& \leq 2C \Big( \sfd(\SS_{\delta + \phi(x_t)} u(x_t), \SS_{\delta + \phi(x_t)} u^b(x_0) ) + \sfd(\SS_{\delta + \phi(x_t)} u^b(x_0), \SS_{\delta} u^b(x_0) ) \Big) 
\\
& \leq 2C \Big( \sfd(u(x_t), u(x_0)) + \sfd(\SS_{\delta + \phi(x_t)} u^b(x_0), \SS_{\delta} u^b(x_0) )  \Big).
\end{align*}  
As $t \to 0$, the first distance goes to $0$ by assumption, and the second one does too owing to $\delta + \phi(x_t) \to \delta + \phi(x_0) = \delta$ together with the continuity of $s \mapsto \SS_s u^b(x_0)$.
\end{proof}

\begin{prop}
\label{prop:control_energy_entropy}
Let $\phi \in C^2(\overline{\Omega})$ be non-negative with $\phi = 0$ on $\partial\Omega$, and take any $u \in H^1(\Omega,\X)$ with boundary value $u^b$ such that $\E(u^b)\in L^1(\partial\Omega)$.
For fixed $\delta>0$, set
\[
\tilde u^\delta(x) := \SS_{\delta+\phi(x)} u(x).
\]
Then $\tilde u^\delta \in H^1(\Omega,\X)$ and
\begin{equation}
\label{eq:control_energy_entropy_delta}
\Dir(\tilde u^\delta )
+\int_\Omega(-\Delta \phi)(x)\E(\tilde u^\delta(x))\,\rd x 
\leq 
\Dir(u) + \int_{\partial\Omega}\left(-\frac{\partial\phi}{\partial\sfn}\right)\E(\SS_\delta u^b)\,\rd\sigma,
\end{equation}
where $\sigma=\mathcal{H}^{d-1}\mrest \partial\Omega$.
\end{prop}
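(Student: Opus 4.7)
The strategy is to invoke the two-point EVI estimate of Lemma~\ref{lem:fundamental} pointwise with spatially varying times $t_1 = \delta + \phi(x)$ and $t_2 = \delta + \phi(y)$, then integrate against the approximate Dirichlet kernel and pass to the limit $\varepsilon \downarrow 0$.

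Fix $\varepsilon > 0$. Since $\delta > 0$, the regularization \eqref{eq:regularization} ensures that $\E(\tilde u^\delta(x))$ is finite for every $x$, so Lemma~\ref{lem:fundamental} applied with $(u_1,u_2,t_1,t_2) = (u(x),u(y),\delta+\phi(x),\delta+\phi(y))$ yields the pointwise estimate
\[
\tfrac12 \sfd^2(\tilde u^\delta(x),\tilde u^\delta(y)) + (\phi(x)-\phi(y))\bigl(\E(\tilde u^\delta(x))-\E(\tilde u^\delta(y))\bigr) \leq \tfrac12 \sfd^2(u(x),u(y)).
\]
Multiplying by $\mathds{1}_{|x-y|\leq\varepsilon}/(C_d\varepsilon^{d+2})$ and integrating over $\Omega \times \Omega$ produces
\[
\Dir_\varepsilon(\tilde u^\delta) + I_\varepsilon \leq \Dir_\varepsilon(u),
\]
where $I_\varepsilon := (C_d\varepsilon^{d+2})^{-1}\iint (\phi(x)-\phi(y))(\E(\tilde u^\delta(x))-\E(\tilde u^\delta(y)))\mathds{1}_{|x-y|\leq\varepsilon}\,\rd x\rd y$.

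As $\varepsilon \downarrow 0$, the right-hand side tends to $\Dir(u)$ by definition of the Dirichlet energy, and $\Dir_\varepsilon(\tilde u^\delta) \to \Dir(\tilde u^\delta) \in [0,\infty]$ by Chiron's theorem. For the cross term, $\phi \in C^2(\overline\Omega)$ is smooth while $\E(\tilde u^\delta) \in W^{1,1}(\Omega)$ by Lemma~\ref{lem:E_W11}, so a Bourgain--Brezis--Mironescu-type recovery identifies $I_\varepsilon \to \int_\Omega \nabla\phi\cdot\nabla\E(\tilde u^\delta)\,\rd x$. In particular $\Dir(\tilde u^\delta) < +\infty$, hence $\tilde u^\delta \in H^1(\Omega,\X)$, and
\[
\Dir(\tilde u^\delta) + \int_\Omega \nabla\phi\cdot\nabla\E(\tilde u^\delta)\,\rd x \leq \Dir(u).
\]
A classical integration by parts---using $\phi \in C^2(\overline\Omega)$ together with the $W^{1,1}$ trace $\E(\SS_\delta u^b)$ supplied by Lemma~\ref{lem:E_W11}---rewrites the cross term as $\int_\Omega (-\Delta\phi)\E(\tilde u^\delta)\,\rd x + \int_{\partial\Omega} \tfrac{\partial\phi}{\partial\sfn}\E(\SS_\delta u^b)\,\rd\sigma$, and rearranging produces exactly \eqref{eq:control_energy_entropy_delta}. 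The boundary integral is well-defined: $\E(\SS_\delta u^b)^+ \leq \E(u^b)^+ \in L^1(\partial\Omega)$ by monotonicity \eqref{eq:monotonicity-entropy}, while its negative part lies in $L^2(\partial\Omega)$ via \eqref{eq:linear-lower-bound} applied to $\SS_\delta u^b \in L^2(\partial\Omega,\X)$ (the latter granted by the contractivity of $\SS_\delta$).

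\textbf{Main obstacle.} The technical heart is the BBM-type limit for $I_\varepsilon$ at the low regularity $\E(\tilde u^\delta) \in W^{1,1}$: one cannot merely polarize a quadratic BBM identity since the target function is not known to lie in $H^1$. The remedy is to exploit the smoothness of $\phi$: after the change of variables $z = y-x$ and the Taylor expansion $\phi(x+z)-\phi(x) = \nabla\phi(x)\cdot z + O(|z|^2)$, the moment identity $\int_{B_\varepsilon} z_i z_j\,\rd z = \delta_{ij}\, C_d \varepsilon^{d+2}$ delivers the expected leading term, while the $L^1$-translation continuity $\|g(\cdot+z)-g\|_{L^1} \leq |z|\|\nabla g\|_{L^1}$ (valid for $g \in W^{1,1}$) renders the quadratic remainder $O(\varepsilon)$. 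The boundary of $\Omega$ causes no trouble here, as it merely restricts the integration domain; boundary contributions surface only afterwards, through the integration by parts of the last step.
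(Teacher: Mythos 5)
Your proof follows essentially the same strategy as the paper's: apply Lemma~\ref{lem:fundamental} pointwise with times $\delta+\phi(x)$ and $\delta+\phi(y)$, divide by the approximate Dirichlet kernel, integrate, pass to the limit $\eps\downarrow 0$, and finish with an integration by parts and the boundary-trace identification of Lemma~\ref{lem:E_W11}. The convergence of the cross term, which you sketch via change of variables/Taylor/moment identity, is exactly what the paper isolates as Lemma~\ref{lem:IPP_W11}; the only imprecision in your sketch is that the Lipschitz translation estimate $\|g(\cdot+z)-g\|_{L^1}\leq|z|\,\|\nabla g\|_{L^1}$ and the moment identity only handle the $O(|z|^2)$ remainder of $\phi$, whereas to convert the leading term $\nabla\phi(x)\cdot z\,\bigl(\tilde E^\delta(x+z)-\tilde E^\delta(x)\bigr)$ into $\nabla\phi\cdot\nabla\tilde E^\delta$ you additionally need an $L^1$-first-order Taylor expansion of the $W^{1,1}$ function $\tilde E^\delta$ (the paper invokes Spector's theorem), or equivalently $L^1$-continuity of translations of $\nabla\tilde E^\delta$ rather than of $\tilde E^\delta$ itself.
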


\begin{proof}
Note that $\tilde E^\delta=\E\circ \tilde u^\delta\in L^1(\Omega)$ by Lemma~\ref{lem:E_L1}, so the integral in the left-hand side is well defined.
Also, as $u^b$ belongs to $L^2(\partial \Omega,\X)$ as the trace of an $H^1(\Omega,\X)$ function, see \eqref{eq:trace-op}.
Thus Lemma~\ref{lem:E_L1} (with $\phi \equiv 0$, $\Xi=\partial\Omega$ and $\mu=\mathcal{H}^{d-1}\mrest \partial\Omega$) yields $\E(\SS_\delta u^b) \in L^1(\partial \Omega)$ at least for $\delta > 0$.

%Reasonning exactly as in the beginning of the proof of Lemma~\ref{lem:E_W11}, we see that $\E(S_\delta u^b) \in L^1(\partial \Omega)$ at least for $\delta > 0$.
%by \eqref{eq:monotonicity-entropy} we know that $0\leq \E(\SS_\delta u^b(x))\leq \E(u^b(x))$, and since we assume that this latter quantity is integrable we see that $\E(\SS_\delta u^b)\in L^1(\partial\Omega)$ in the right-hand side of \eqref{eq:control_energy_entropy_delta}.
As a consequence, if \eqref{eq:control_energy_entropy_delta} holds true, then in particular $\tilde u^\delta \in H^1(\Omega,\X)$ by the finiteness of the right-hand side. We are thus left to prove the validity of the estimate \eqref{eq:control_energy_entropy_delta}.

By Lemma \ref{lem:fundamental} we have
\[
\frac 12 \sfd^2(\tilde u^\delta (x),\tilde u^\delta (y))
+ (\phi(x)-\phi(y))\big(\E(\tilde u^\delta (x))-\E(\tilde u^\delta (y))\big)
\leq 
\frac 12 \sfd^2(u(x),u(y))
\]
for all $x,y\in\Omega$, paying attention to the fact that both $\E(\tilde u^\delta (x))$ and $\E(\tilde u^\delta (y))$ are finite because here $\phi+\delta\geq \delta>0$.
Dividing by $C_d \eps^{d+2}$, being $C_d$ the constant defined in \eqref{eq:dirichlet_eps}, and integrating in $x,y$, we get exactly
\[
\Dir_\eps(\tilde u^\delta) + \frac{1}{C_d \eps^d}\iint_{\Omega\times\Omega} \frac{\phi(y)-\phi(x)}{\eps}\,\frac{\tilde E^\delta(y)-\tilde E^\delta(x)}{\eps}\mathds{1}_{|x-y|\leq \eps}\, \rd x\rd y
\leq
\Dir_\eps(u).
\]
For fixed $\delta$ we can now pass to the limit $\eps\to 0$.
By \eqref{eq:dirichlet} the two $\eps$-approximate Dirichlet energies converge to the respective Dirichlet energies.

On the other hand, the limit of the second term of the left-hand side can be guessed quite easily from a Taylor expansion. The rigorous justification is a matter of real analysis that we postpone to Lemma~\ref{lem:IPP_W11} in the appendix:
We are here in position to apply this Lemma, because for fixed $\delta>0$ the function $\tilde E^\delta=\E\circ\tilde u^\delta$ belongs to $W^{1,1}(\Omega)$ by Lemma~\ref{lem:E_W11}. By definition the constant $C_d$ in \eqref{eq:dirichlet_eps} is the same as in Lemma~\ref{lem:IPP_W11}, hence we conclude that
\[
\Dir(\tilde u^\delta)
+\int_\Omega\nabla\phi\cdot \nabla\tilde E^\delta\,\d x
\leq 
\Dir(u).
\]
Finally, since $\phi\in C^2(\overline{\Omega})$ and $\tilde E^\delta\in W^{1,1}(\Omega) \subset BV(\Omega)$ we can integrate by parts in the BV sense \cite[Theorem 5.6]{EG}
\[
\int_\Omega\nabla\phi\cdot \nabla\tilde E^\delta\,\d x
=
-\int_\Omega \Delta \phi \,\tilde E^\delta \,\rd x + \int_{\partial\Omega}\frac{\partial\phi}{\partial\sfn} \left(\tilde E^\delta\right)^b\,\rd\sigma.
\]
Lemma~\ref{lem:E_W11} guarantees that the boundary trace is exactly $(\tilde E^\delta)^b=\E(\SS_\delta u^b)$, hence \eqref{eq:control_energy_entropy_delta} follows and the proof is complete.
%Owing to the assumption that $-\frac{\partial\phi}{\partial\sfn}>0$ a.e.\ on $\partial\Omega$, Lemma~\ref{lem:E_W11} guarantees that the boundary trace is exactly $(\tilde E^\delta)^b=\E(\SS_\delta u^b)$ and the proof is complete.
\end{proof}

The next step is to remove the regularization parameter $\delta>0$.
At this stage this temporarily imposes either an additional super-harmonicity condition on the test function $\phi$ (allowing to apply a monotone convergence, see below), or an extra $L^1$-regularity assumption on $\E\circ \tilde u$. 
We will actually establish this regularity in full generality later on, so one can essentially think of this statement as holding for any $\phi\geq 0$ smooth enough. 

\begin{prop}
\label{prop:phi_superharmonic}
Let $u \in H^1(\Omega,\X)$ with boundary value $u^b$ be such that $\E(u^b)\in L^1(\partial\Omega)$. Fix $\phi \in C^2(\overline{\Omega})$ a non-negative super-harmonic function ($-\Delta\phi\geq 0$) vanishing on the boundary, and set
\[
\tilde u(x) := \SS_{\phi(x)} u(x).
\]
Then $\tilde u \in H^1(\Omega,\X)$ with trace $(\tilde u)^b=u^b$, and satisfies
\begin{equation}
\label{eq:control_energy_entropy}
\Dir(\tilde u )+\int_\Omega(-\Delta \phi)(x)\E(\tilde u(x))\,\d x 
\leq 
\Dir(u) + \int_{\partial\Omega}\left(-\frac{\partial\phi}{\partial\sfn}\right)\E(u^b)\,\d\sigma.
\end{equation}
If one assumes in addition that $\E(\tilde u) \in L^1(\Omega)$, then the hypothesis $-\Delta \phi \geq 0$ is no longer needed. 
\end{prop}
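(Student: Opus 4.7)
The plan is to apply Proposition~\ref{prop:control_energy_entropy} at each $\delta>0$ and then pass $\delta\to 0$ in \eqref{eq:control_energy_entropy_delta}. The work splits into a few distinct limit passages, and the identification of the trace is a direct continuity argument.

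First I would establish the $L^2$-convergence $\tilde{u}^\delta\to\tilde{u}$. For each $x$, continuity in time of $\SS$ gives pointwise convergence $\SS_{\delta+\phi(x)}u(x)\to\SS_{\phi(x)}u(x)$, and the bound \eqref{eq:uniform-bound} together with $u\in L^2(\Omega,\X)$ provides a uniform $L^2$-envelope (valid, say, for $\delta\in(0,1]$). Dominated convergence then yields $\sfd_{L^2}(\tilde{u}^\delta,\tilde{u})\to 0$, so the lower semicontinuity \eqref{eq:lsc-dirichlet} of the Dirichlet energy gives $\Dir(\tilde{u})\leq\liminf_{\delta\to 0}\Dir(\tilde{u}^\delta)$.

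Next, for the interior integral $\int_\Omega(-\Delta\phi)\E(\tilde{u}^\delta)\,\d x$, the key observation is that by the monotonicity \eqref{eq:monotonicity-entropy} of the energy along its gradient flow, $\delta\mapsto\E(\tilde{u}^\delta(x))=\E(\SS_{\delta+\phi(x)}u(x))$ is non-increasing; combined with the lower semicontinuity of $\E$ this gives $\E(\tilde{u}^\delta)\nearrow\E(\tilde{u})$ pointwise as $\delta\searrow 0$. Splitting into positive and negative parts and exploiting $-\Delta\phi\geq 0$: the positive part is handled by monotone convergence, while the negative part is handled by dominated convergence with envelope $\E(\tilde{u}^{\delta_0})^-\in L^2(\Omega)\subset L^1(\Omega)$ for any fixed $\delta_0>0$, as furnished by Lemma~\ref{lem:E_L1}. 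The boundary integral $\int_{\partial\Omega}(-\partial\phi/\partial\sfn)\E(\SS_\delta u^b)\,\d\sigma$ is treated identically: one has $-\partial\phi/\partial\sfn\geq 0$ since $\phi\geq 0$ attains its minimum on $\partial\Omega$, and the positive and negative parts of $\E(\SS_\delta u^b)$ are both dominated, respectively by $\E(u^b)^+\in L^1(\partial\Omega)$ (using the hypothesis $\E(u^b)\in L^1(\partial\Omega)$ together with monotonicity) and by $\E(\SS_{\delta_0}u^b)^-$ for a fixed $\delta_0>0$. Assembling these limits in \eqref{eq:control_energy_entropy_delta} yields \eqref{eq:control_energy_entropy}, and the finiteness of the right-hand side simultaneously gives $\Dir(\tilde{u})<\infty$, hence $\tilde{u}\in H^1(\Omega,\X)$.

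For the trace identification, I would pick $x_0$ in the full-measure subset of $\partial\Omega$ where $t\mapsto u(x_t)$ is continuous along a transverse flow with $u(x_t)\to u^b(x_0)$; since $\phi(x_t)\to\phi(x_0)=0$, the estimate $\sfd(\SS_{\phi(x_t)}u(x_t),u^b(x_0))\leq\sfd(u(x_t),u^b(x_0))+\sfd(\SS_{\phi(x_t)}u^b(x_0),u^b(x_0))$, obtained by the triangle inequality and the contraction \eqref{eq:contraction}, together with the continuity of $s\mapsto\SS_s u^b(x_0)$ at $s=0$, shows that $\tilde{u}(x_t)\to u^b(x_0)$, so $(\tilde{u})^b=u^b$. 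Finally, to remove the super-harmonicity assumption when $\E(\tilde{u})\in L^1(\Omega)$, one notes that in this case $\E(\tilde{u}^\delta)^+\leq\E(\tilde{u})^+\in L^1(\Omega)$ also provides an $L^1$-envelope for the positive part, so dominated convergence handles the interior integral without any sign constraint on $-\Delta\phi$. The main obstacle throughout is that $\E$ is not assumed bounded from below, so one cannot simply invoke monotone convergence on the signed integrand; the right trick is to align the monotonicity of $t\mapsto\E(\SS_t u)$ with the sign conditions on $-\Delta\phi$ and $-\partial\phi/\partial\sfn$, thereby treating the positive part by monotone convergence and keeping the negative part uniformly $L^1$-dominated.
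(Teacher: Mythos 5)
Your proposal is correct and follows essentially the same route as the paper: apply Proposition~\ref{prop:control_energy_entropy} at each $\delta>0$, pass $\delta\downarrow 0$ using the $L^2$-convergence $\tilde u^\delta\to\tilde u$ for lower semicontinuity of the Dirichlet energy, the monotone convergence $\E(\tilde u^\delta)\nearrow\E(\tilde u)$ and $\E(\SS_\delta u^b)\nearrow\E(u^b)$ for the two integrals, and the contraction estimate for the trace. The only cosmetic difference is that you split the signed integrands into positive and negative parts with separate monotone and dominated convergence envelopes, where the paper invokes Beppo Levi directly anchored at a fixed $\delta_0$ (and uses a two-sided sandwich $\E(\tilde u)\leq\E(\tilde u^\delta)\leq\E(\tilde u^1)$ for the variant without super-harmonicity); these are interchangeable bookkeeping choices, not a genuinely different argument.
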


Note that the right-hand side is finite, since by assumption $\E(u^b)\in L^1(\partial\Omega)$. On the other hand, the negative part of the integrand in the left-hand side is in $L^1(\Omega)$ if either $\phi$ is super-harmonic or $\E(\tilde{u}) \in L^1(\Omega)$. Thus, it is actually part of the statement that both the integral and the Dirichlet energy in the left-hand side are finite. Also, as tempting as it might be, we do not claim any $W^{1,1}$-regularity of $\tilde E := \E \circ \tilde u$ or that its trace is $\E(u^b)$.

\begin{proof}
Take $0 < \delta \leq 1$ and let as before
\[
\tilde u^\delta(x):=\SS_{\delta+\phi(x)} u(x)=\SS_{\delta}\tilde u(x).
\]
The strategy is to pass to the liminf as $\delta \downarrow 0$ in \eqref{eq:control_energy_entropy_delta}. To this end, note first that $\tilde u^\delta\to\tilde u$ at least pointwise a.e.\ and that by \eqref{eq:monotonicity-entropy} and lower semicontinuity of $\E$ we have monotone convergence $\E(\tilde u^\delta)\nearrow \E(\tilde u)$ at least pointwise a.e.\ as $\delta\searrow 0$.
%Since $\phi$ grows linearly at the boundary we are exactly in position of applying Proposition~\ref{prop:control_energy_entropy}, and we show now that all the terms pass to the $\liminf$ as $\delta\to 0$ in \eqref{eq:control_energy_entropy_delta}.

By \eqref{eq:uniform-bound}, which holds true also for $\delta=0$, we see that for any arbitrary $w \in \X$
\[
\sfd(\tilde{u}^\delta(x),\tilde{u}(x)) \leq \sfd(\tilde{u}^\delta(x),w) + \sfd(\tilde{u}(x),w) \leq 2\sfd(u(x),w) + C_{\phi, w}
\]
for all $\delta < 1$.
This uniform bound, the fact that $\tilde u^\delta \to \tilde u$ a.e., and Lebesgue's dominated convergence theorem as well as the very definition \eqref{eq:L2-distance} of $\sfd_{L^2}$ then yield that, in fact, $\tilde u^\delta \to \tilde u$ in $L^2(\Omega,\X)$.
Whence by lower semicontinuity of the Dirichlet energy \eqref{eq:lsc-dirichlet} we see that
\[
\Dir(\tilde u) \leq \liminf_{\delta \downarrow 0} \Dir(\tilde u^\delta).
\]
Since $\E(\tilde u^\delta)\nearrow \E(\tilde u)$, $(-\Delta\phi)\geq 0$, and $\E(\tilde u^\delta)$ belongs to $L^1(\Omega)$ for any $\delta > 0$, Beppo Levi's monotone convergence theorem guarantees that the integral in the left-hand side of \eqref{eq:control_energy_entropy_delta} passes to the limit.
If we no longer assume $(-\Delta\phi)\geq 0$ and require instead that $\E(\tilde u) \in L^1(\Omega)$, then we can rely solely on the pointwise convergence of $\E(\tilde u^\delta)$ to $\E(\tilde u)$ as well as the monotonicity $\E(\tilde u)\leq \E(\tilde u^\delta)\leq \E(\tilde u ^1)$ for $0<\delta\leq 1$ and thus
\begin{equation*}
\left| (-\Delta\phi) \E(\tilde u^\delta) \right| \leq \|\Delta \phi \|_\infty \max\big\{ |\E(\tilde u)|, |\E(\tilde u^1)| \big\}. 
\end{equation*} 
As the function $\E(\tilde u)$ is assumed to be in $L^1(\Omega)$ while $\E(\tilde u^1) \in L^1(\Omega)$ by Lemma~\ref{lem:E_L1}, Lebesgue's dominated convergence theorem allows passing to the limit as $\delta \downarrow 0$ in this case too.

In the right-hand side, observe that $\phi\geq 0$ in $\Omega$ implies $-\frac{\partial\phi}{\partial \sfn}\geq 0$ on the boundary.
The monotonicity $\E(\SS_\delta u^b)\nearrow \E(u^b)$ then allows to take the limit similarly.
Here we use that $\E(\SS_\delta u^b) \in L^1(\partial\Omega)$ for any $\delta > 0$, which is a consequence of Lemma~\ref{lem:E_L1}.
Thus taking the $\liminf$ as $\delta\to 0$ in \eqref{eq:control_energy_entropy_delta} results exactly in \eqref{eq:control_energy_entropy}, which as already discussed grants in particular that $\tilde u \in H^1(\Omega,\X)$.
 
Finally, let us check that $\tilde u$ has trace $u^b$.
To this aim, fix any transversal vector field $\Z$ pointing inward on $\partial\Omega$, and denote again $x_t$ the integral curve starting from $x_0 \in \partial\Omega$.
Since $u\in H^1(\Omega,\X)$ by assumption and $\tilde u \in H^1(\Omega,\X)$ by the previous argument, recalling the construction of the trace operator \eqref{eq:trace-op} discussed in Section \ref{s2} we know that, for a.e.\ $x_0 \in \partial\Omega$, the curve $u(x_t)$ is H\"older continuous and converges to $u^b(x_0)$ as $t\to 0$.
Hence, to prove that $\tilde u$ has trace $u^b$, it is enough to check that $\tilde{u}(x_t)$ also converges to $u^b(x_0)$ as $t\to 0$.
By the contractivity property \eqref{eq:contraction} and continuity of $\EVI_0$-gradient flows we can write
\begin{align*}
\sfd(\tilde u(x_t),u^b(x_0))
& = \sfd(\SS_{\phi(x_t)}u(x_t),u^b(x_0)) \\
& \leq \sfd(\SS_{\phi(x_t)}u(x_t),\SS_{\phi(x_t)}u^b(x_0)) + \sfd(\SS_{\phi(x_t)}u^b(x_0),u^b(x_0)) \\
& \leq \sfd(u(x_t),u^b(x_0)) + \sfd(\SS_{\phi(x_t)}u^b(x_0),u^b(x_0)) \to 0,
\end{align*}
because $\phi(x_t)\to\phi(x_0)=0$ on the boundary. This completes the proof of our claim, hence of the proposition.  
%
%
%
%
%
%
% the curves $u(x_t)$ and $\tilde u(x_t) = \SS_{\phi(x_t)}\gamma_t$ are both H\"older continuous in a right neighbourhood of $t=0$ with
%\[
%\lim_{t \to 0} u(x_t) = u^b(x_0) \qquad\textrm{and}\qquad \tilde \lim_{t \to 0} \tilde u(x_t) = (\tilde u)^b(x_0).
%\]
%By the contractivity property \eqref{eq:contraction} and continuity of $\EVI_0$-gradient flows we have moreover
%\[
%\begin{split}
%\sfd(\tilde u(x_t),u^b(x_0))
%& = \sfd(\SS_{\phi(x_t)}u(x_t),u^b(x_0)) \leq \sfd(\SS_{\phi(x_t)}u(x_t),\SS_{\phi(x_t)}u^b(x_0)) + \sfd(\SS_{\phi(x_t)}u^b(x_0),u^b(x_0))
%\\
%& \leq \sfd(u(x_t),u^b(x_0)) + \sfd(\SS_{\phi(x_t)}u^b(x_0),u^b(x_0)) \to 0,
%\end{split}
%\]
%because $\phi(x_t)\to\phi(x_0)=0$ on the boundary. Hence $\tilde u(x_t) \to u^b(x_0)$ and this fact together with $\tilde \gamma_0 = \lim_{t \to 0} \tilde \gamma_t = (\tilde u)^b(x_0)$ yields $u^b(x_0) = (\tilde u)^b(x_0)$ and the proof is complete.
\end{proof}

\begin{cor}
\label{cor:E_is_L1}
Let $u\in H^1(\Omega,\X)$ be harmonic with boundary value $u^b$, and $\E(u^b)\in L^1(\partial\Omega)$.
Then $\E(u)\in L^1(\Omega)$.
\end{cor}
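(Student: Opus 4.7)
Apply Proposition~\ref{prop:phi_superharmonic} with a carefully chosen super-harmonic test function, cancel the Dirichlet energy against $\Dir(u)$ by harmonicity, then let the time parameter tend to zero. The concrete test function is $\phi \in C^{2,\alpha}(\overline{\Omega})$ solving
\[
-\Delta \phi = 1 \quad \text{in } \Omega, \qquad \phi = 0 \quad \text{on } \partial \Omega,
\]
which exists and is non-negative thanks to standard elliptic regularity (since $\partial\Omega$ is $C^{2,\alpha}$) and the weak maximum principle. The advantage of this $\phi$ is that $-\Delta\phi$ is a strictly positive constant and $-\partial \phi/\partial \sfn$ is bounded on $\partial\Omega$.

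\textbf{Key estimate.} For each $\lambda \in (0,1]$ set $\phi_\lambda := \lambda \phi$ and $\tilde u_\lambda(x) := \SS_{\lambda \phi(x)} u(x)$. Then $\phi_\lambda \in C^{2}(\overline{\Omega})$ is non-negative, vanishes on $\partial\Omega$, and is super-harmonic since $-\Delta \phi_\lambda = \lambda \geq 0$. Proposition~\ref{prop:phi_superharmonic} thus gives $\tilde u_\lambda \in H^1(\Omega,\X)$ with trace $u^b$, along with
\[
\Dir(\tilde u_\lambda) + \lambda \int_\Omega \E(\tilde u_\lambda)\,\d x \;\leq\; \Dir(u) + \lambda \int_{\partial\Omega} \left(-\frac{\partial \phi}{\partial \sfn}\right) \E(u^b)\,\d \sigma.
\]
Since $u$ is harmonic and $(\tilde u_\lambda)^b = u^b$, we have $\Dir(u) \leq \Dir(\tilde u_\lambda)$; cancelling and dividing by $\lambda > 0$ yields the key bound
\[
\int_\Omega \E(\tilde u_\lambda)\,\d x \;\leq\; \int_{\partial\Omega} \left(-\frac{\partial \phi}{\partial \sfn}\right) \E(u^b)\,\d \sigma \;=:\; C < \infty,
\]
uniformly in $\lambda \in (0,1]$, the finiteness of $C$ following from the boundedness of $-\partial \phi/\partial \sfn$ together with the hypothesis $\E(u^b) \in L^1(\partial\Omega)$.

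\textbf{Passing to the limit.} Fix any $w \in \X$. From \eqref{eq:uniform-bound} applied with $\delta = 0$ and $\phi$ replaced by $\lambda \phi$, together with continuity of $t \mapsto \SS_t w$ on the compact interval $[0, \|\phi\|_\infty]$, $\sfd(\tilde u_\lambda(\cdot),w)$ is uniformly bounded in $L^2(\Omega)$ for $\lambda \in (0,1]$. Combined with the linear lower bound \eqref{eq:linear-lower-bound}, this yields a uniform bound $\int_\Omega \E(\tilde u_\lambda)^-\,\d x \leq M < \infty$. Consequently,
\[
\int_\Omega \E(\tilde u_\lambda)^+\,\d x \;\leq\; C + M.
\]
As $\lambda \downarrow 0$, the semigroup continuity gives $\tilde u_\lambda(x) \to u(x)$ pointwise, while the monotonicity \eqref{eq:monotonicity-entropy} and lower semicontinuity of $\E$ together force the monotone convergence $\E(\tilde u_\lambda) \nearrow \E(u)$, hence $\E(\tilde u_\lambda)^+ \nearrow \E(u)^+$. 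Beppo Levi's theorem then delivers $\int_\Omega \E(u)^+\,\d x \leq C + M < \infty$. Since $\E(u)^- \in L^2(\Omega)$ by \eqref{eq:linear-lower-bound} and $u \in L^2(\Omega,\X)$, we conclude $\E(u) \in L^1(\Omega)$.

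\textbf{Main obstacle.} The argument is short modulo the machinery already built; the only genuine subtlety is that $\E$ is not assumed bounded from below, so monotone convergence cannot be applied to $\E(\tilde u_\lambda)$ directly. One must split into positive and negative parts and absorb the uncontrolled sign of $\E$ through the uniform $L^2$-bound on $\E(\tilde u_\lambda)^-$ coming for free from \eqref{eq:linear-lower-bound} and the $L^2$-boundedness of $\tilde u_\lambda$.
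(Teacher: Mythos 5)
Your proof is correct and follows essentially the same route as the paper's: the same unit-source test function $\phi$, the same scaling by $\lambda$, the same cancellation of Dirichlet energies via minimality, and the same monotone-limit step as $\lambda\downarrow 0$. The only cosmetic difference is that you split $\E(\tilde u_\lambda)$ into positive and negative parts and bound $\E(\tilde u_\lambda)^-$ uniformly via \eqref{eq:linear-lower-bound}, whereas the paper invokes the fixed integrable minorant $\E(\tilde u^1)^-\in L^2(\Omega)$ from Lemma~\ref{lem:E_L1}; these are interchangeable ways of justifying the monotone convergence.
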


\begin{proof}
Let $\psi$ be the unique $C^2(\overline{\Omega})$-solution of
\begin{equation}
\label{eq:def_psi}
 \begin{cases}
  -\Delta \psi=1 & \mbox{in }\Omega,\\
  \psi=0 & \mbox{on }\partial\Omega.
 \end{cases}
\end{equation}
Note that by the maximum principle we have $\psi\geq 0$ in $\Omega$, hence in particular $-\frac{\partial\psi}{\partial \sfn}\geq 0$ on $\partial\Omega$.
For small $\lambda>0$ let $\phi(x):=\lambda \psi(x)$, write
\[
\tilde u^\lambda(x) := \SS_{\lambda\psi(x)}u(x),
\]
and observe that
\[
\tilde u^\lambda(x)\to u(x)
 \qquad \mbox{a.e. as }\lambda \searrow 0.
\]
According to Proposition~\ref{prop:phi_superharmonic} we know that, for any fixed $\lambda>0$, $\tilde u^\lambda\in H^1(\Omega,\X)$ has the same boundary trace $u^b$ as $u$, and is thus an admissible competitor for the Dirichlet problem. Hence $\Dir(u)\leq \Dir(\tilde u^\lambda)$ in \eqref{eq:control_energy_entropy} and thus
\[
\int_\Omega(-\lambda\Delta \psi)(x)\E(\tilde u^\lambda(x))\,\d x 
\leq 
\int_{\partial\Omega}\left(-\lambda\frac{\partial\psi}{\partial \sfn}\right)\E(u^b)\,\d\sigma
\]
for any $\lambda$. Dividing by $\lambda>0$ and recalling that $-\Delta\psi=1$ we get
\[
\int_\Omega\E(\tilde u^\lambda(x))\,\d x 
\leq 
\int_{\partial\Omega}\left(-\frac{\partial\psi}{\partial \sfn}\right)\E(u^b)\,\d\sigma
\leq C_\Omega \int \E(u^b)^+\,\d\sigma<+\infty,
\]
where $C_\Omega>0$ is a uniform upper bound for $-\frac{\partial\psi}{\partial \sfn}(x)>0$ that only depends on the domain $\Omega$.
Finally, note that $\lambda\psi(x)>0$ is monotone increasing in $\lambda$.
By the monotonicity property \eqref{eq:monotonicity-entropy} we have that, at least for $\lambda \leq 1$, $\E(\tilde u^1(x))\leq \E(\tilde u^\lambda(x))\nearrow \E(u(x))$ as $\lambda \searrow 0$ and the claim finally follows by Beppo Levi's monotone convergence combined with the integrability of $\E(\tilde u^1)^-$, see Lemma~\ref{lem:E_L1}.
\end{proof}

\begin{rmk}
\label{rk:domain_regularity}
We have used the $C^{2,\alpha}$-regularity of $\partial\Omega$ in the proof of Corollary~\ref{cor:E_is_L1} by claiming that $\psi$, defined as a solution of \eqref{eq:def_psi}, is $C^2$ up to the boundary, see \cite[Theorem 6.14]{GilTru15}. This is known to fail if, for instance, the domain has only a Lipschitz boundary: see \cite[Theorem A]{jerison1995inhomogeneous}. As shown in \cite[Theorem 1.2 and Remark 1.3]{Costabel19}, $C^1$-regularity of the boundary is not sufficient, either.
However, as the reader can check, if for some reason one could justify the existence of a smooth non-negative function $\psi$ that vanishes on $\partial \Omega$ and such that $- \Delta \psi \geq c > 0$ uniformly on $\Omega$, then Corollary~\ref{cor:E_is_L1} would hold, as well as Theorem~\ref{thm:main}.
For instance, our approach covers the hypercube in $\R^d$.     
\end{rmk}

As a consequence of this newly established $L^1$-regularity for $\E(u)$ we will be able to use Proposition~\ref{prop:phi_superharmonic} without the superharmonicity assumption on $\phi$. This will allow us to conclude and prove our main result, Theorem~\ref{thm:main}.

\begin{proof}[Proof of Theorem~\ref{thm:main}]
The $L^1$-regularity has already been proved in Corollary~\ref{cor:E_is_L1}. In order to establish \eqref{eq:control_energy_entropy_improved_harmonic}, fix any $\varphi\geq 0$ as in our statement.
We take $\phi=\lambda \varphi$ and write $\tilde u^\lambda(x) := \SS_{\lambda \varphi(x)}u(x)$. We note that $\E(\tilde u^\lambda) \leq \E(u)$ by monotonicity so that $\E(\tilde u^\lambda)^+ \in L^1(\Omega)$, while Lemma~\ref{lem:E_L1} yields $\E(\tilde u^\lambda)^- \in L^1(\Omega)$. We can then apply Proposition~\ref{prop:phi_superharmonic} without the assumption $- \Delta \phi \geq 0$, as $\E(\tilde u^\lambda) \in L^1(\Omega)$, and observe that $\tilde u^\lambda$ is an admissible competitor in the Dirichlet problem (with data $u^b$), so that plugging this information into \eqref{eq:control_energy_entropy} gives
\[
\int_\Omega(-\Delta \varphi)(x)\E(\tilde u^\lambda(x))\,\d x 
\leq 
\int_{\partial\Omega}\left(-\frac{\partial\varphi}{\partial \sfn}\right)\E(u^b)\,\d\sigma.
\]
Exploiting as before the monotone convergence $\E(\tilde u^1(x))\leq \E(\tilde u^\lambda(x))\nearrow \E(u(x))$ with $\E(u)\in L^1(\Omega)$ as well as $\E(\tilde u^1(x))^- \in L^1(\Omega)$, ensured by Lemma~\ref{lem:E_L1},  we see that $\E(\tilde u^\lambda)\to\E(u)$ in $L^1(\Omega)$ as $ \lambda \to 0$ and \eqref{eq:control_energy_entropy_improved_harmonic} follows.
\end{proof}

\begin{rmk}
\label{rk:local_min}
In the proofs of Corollary~\ref{cor:E_is_L1} and Theorem~\ref{thm:main}, we only used that $u$ is harmonic to write $\Dir(\tilde{u}^\lambda) \leq \Dir(u)$ for $\lambda$ small enough, and, as hinted in the proof of Proposition~\ref{prop:phi_superharmonic}, it is easy to see that $\tilde{u}^\lambda$ converges strongly to $u$ in $L^2(\Omega,\X)$ in both cases.
Thus we could relax the assumption ``$u$ harmonic'' into ``$u$ is a local minimizer of $\Dir$ in $H^1_u(\Omega,\X)$ for the strong $L^2(\Omega,\X)$ topology'' and the results would remain valid.
\end{rmk}

\begin{proof}[Proof of Theorems~\ref{th:Linfty} and \ref{rk:Lpmax}]
We argue by duality: let $g \in C^\infty(\overline{\Omega})$ be non-negative and let $\varphi = \varphi_g$ be the unique $C^2(\overline{\Omega})$-solution of
\[
\begin{cases}
  -\Delta\varphi_g=g & \mbox{in }\Omega,\\
  \varphi_g =0 & \mbox{on }\partial \Omega.
\end{cases}
\]
Observe that $\varphi_g \geq 0$ on $\Omega$ by the classical maximum principle.
Since $\varphi_g$ meets the requirements of Theorem~\ref{thm:main},
%{\color{red}[Leo]: Nooooo! the elliptic regularity does not guarantee $\phi\in C^2(\bar\Omega)$, even in smooth domains I believe. I don't think this is a serious gap, we as currently written it doens't work}
we get from \eqref{eq:control_energy_entropy_improved_harmonic} and for any pair $(q,q')$ of conjugate exponents
\begin{align*}
\int_\Omega g\,\E(u)\,\d x
& =
\int_\Omega (-\Delta\varphi_g)\E(u)\,\d x
\leq 
\int_{\partial\Omega}\Big(-\frac{\partial\varphi_g}{\partial \sfn}\Big)\E(u^b)\,\d\sigma
\\
& \leq \int_{\partial\Omega}\Big(-\frac{\partial\varphi_g}{\partial \sfn}\Big)\E(u^b)^+ \,\d\sigma \leq \left\|\frac{\partial\varphi_g}{\partial \sfn}\right\|_{L^{q'}(\partial\Omega)} \|\E( u^b)^+\|_{L^{q}(\partial\Omega)}.
\end{align*}
Now assume that we can find another pair $(p,p')$ of conjugate exponents and a constant $C_{L^{p'} \to L^{q'}}$ such that $\left\|\frac{\partial\varphi_g}{\partial \sfn}\right\|_{L^{q'}(\partial\Omega)}  \leq C_{L^{p'} \to L^{q'}} \| g \|_{L^{p'}(\Omega)}$ for all $g \in C^\infty(\overline{\Omega})$: the constant $C_{L^{p'} \to L^{q'}} $ can be interpreted as the operator norm of the ``Data to Neumann'' map in suitable functional spaces.
Given the $L^2(\Omega)$-integrability of $\E(u)^-$ coming from Lemma~\ref{lem:E_L1}, taking the supremum with respect to all non-negative $g \in C(\overline{\Omega})$ with $\| g \|_{L^{p'}(\Omega)} \leq 1$ yields $\| \E(u)^+ \|_{L^p(\Omega)}$ in the left-hand side, see Lemma~\ref{lem:positive_part} (postponed to the appendix to avoid overburdening real analysis).
Thus we conclude: 
\[ 
\| \E(u)^+ \|_{L^p(\Omega)} \leq  C_{L^{p'} \to L^{q'}} \|\E( u^b)^+\|_{L^{q}(\partial\Omega)}. 
\] 

To prove Theorem~\ref{th:Linfty} in the case $\esssup\limits_{x\in\partial\Omega} \E(u^b(x)) = \|\E( u^b)^+\|_{L^{\infty}(\partial\Omega)}$, i.-e.~ $p=q=\infty$, we need only to justify $C_{L^{1} \to L^{1}} \leq 1$.
To this end, observe first that $-\frac{\partial\varphi_g}{\partial \sfn} \geq 0$ everywhere on the boundary due to $\varphi_g\geq 0$, and integrating the equation defining $g$ thus yields
\begin{equation*}
 \left\|\frac{\partial\varphi_g}{\partial \sfn}\right\|_{L^1(\partial\Omega)}
 =\int_{\partial\Omega} -\frac{\partial\varphi_g}{\partial \sfn} \,\d\sigma
  =\int_{\Omega}-\Delta\varphi_g \,\d x
 =\int_{\Omega} g \,\d x = \| g \|_{L^1(\Omega)}.
\end{equation*}
In the case $\esssup\limits_{x\in\partial\Omega} \E(u^b(x))  < 0$, it is enough to shift $\E$ by a constant large enough to make it non-negative.

Then to prove Theorem~\ref{rk:Lpmax} we need to justify $C_{L^{p'} \to L^{q'}} < + \infty$ for the same exponents as in the statement of the Theorem.
For $q > 1$ this follows from standard elliptic regularity and the fact that the boundary trace operator acts from $W^{1,p'}(\Omega)$ into $L^{q'}(\partial\Omega)$, cf. \cite[Section 15.3]{Leoni09}.
More precisely, the continuous image of the trace operator is actually the Besov space $B^{1-\frac 1 {p'},p'}(\partial\Omega)$, see e.g.\ \cite[Section 15.3]{Leoni09}.
Since the dimension of $\partial \Omega$ is $d-1$, this  Besov space is continuously embedded into $L^{\frac{p'(d-1)}{d-p'}}(\partial\Omega)$, cf.\ \cite[Theorem 14.29]{Leoni09}, and an explicit computation shows that  $\frac{p'(d-1)}{d-p'}=q'$.
Finally, let $q=1$ and fix any $p<d'$.
Then $p'>d$, so $W^{1,p'}(\Omega)$ is continuously embedded into $C(\overline\Omega)$.
Thus the trace operator acts continuously from $W^{1,p'}(\Omega)$ into $C(\partial\Omega)$.
We infer that the map $g \mapsto \frac{\partial \varphi_g}{\partial \sfn}$ acts continuously from $L^{p'}(\Omega)$ into $C(\partial\Omega)\subset L^\infty(\partial \Omega)$. 
\end{proof}

\begin{appendices}

\section{Two technical lemmas}

In the proof of Proposition~\ref{prop:control_energy_entropy} we used the following real-analysis Lemma. It would be very easy to prove in the case of $f,g$ smooth functions, here the technical part is to handle the case where $f$ has minimal regularity.

\begin{lem}
\label{lem:IPP_W11}
Let $\Omega\subset \R^d$ be a bounded Lipschitz domain.
For any fixed $f\in W^{1,1}(\Omega)$ and $ g\in C^{1}(\overline{\Omega})$ there holds
\begin{equation}
\lim\limits_{\eps\to 0}\frac{1}{\eps^d}\iint_{\Omega\times\Omega}\frac{f(y)-f(x)}{\eps}\,\frac{ g(y)- g(x)}{\eps}\mathds{1}_{|x-y|\leq \eps}\,\d x\d y
=
C_d
\int_\Omega \nabla f(x)\cdot \nabla g(x)\rd x
\label{eq:IPP_eps}
\end{equation}
with dimensional constant $C_d=\frac 1d\int_{B_1}|z|^2\,\rd z$.
\end{lem}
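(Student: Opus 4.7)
The plan is to first establish the identity for smooth $f$ via a direct change of variables and dominated convergence, and then extend to $f \in W^{1,1}(\Omega)$ by density, using a uniform-in-$\eps$ bilinear bound on the left-hand side.

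For the uniform bound: since $\Omega$ is a bounded Lipschitz domain, I use Stein's extension theorem to extend any $f \in W^{1,1}(\Omega)$ to $\tilde f \in W^{1,1}(\R^d)$ with $\|\nabla \tilde f\|_{L^1(\R^d)} \leq C_\Omega \|f\|_{W^{1,1}(\Omega)}$. The classical translation estimate $\|\tilde f(\cdot + h) - \tilde f\|_{L^1(\R^d)} \leq |h|\,\|\nabla \tilde f\|_{L^1(\R^d)}$, combined with $|g(y)-g(x)| \leq \|\nabla g\|_\infty |y-x| \leq \|\nabla g\|_\infty \eps$ on the domain of integration, gives after the substitution $h := y-x$ with $|h|\leq \eps$:
\[
\left|\,\frac{1}{\eps^{d+2}}\iint_{\Omega\times\Omega}(f(y)-f(x))(g(y)-g(x))\mathds{1}_{|y-x|\leq\eps}\,\d x\d y\,\right|
\leq C\,\|\nabla g\|_\infty\,\|f\|_{W^{1,1}(\Omega)},
\]
uniformly in $\eps > 0$, where $C$ depends only on $\Omega$ and the dimension.

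For $f \in C^1(\overline\Omega)$, I substitute $y = x+\eps z$ and rewrite the integral as
\[
\int_{B_1}\int_{\{x,\,x+\eps z\in\Omega\}}\frac{f(x+\eps z)-f(x)}{\eps}\cdot\frac{g(x+\eps z)-g(x)}{\eps}\,\d x\,\d z.
\]
By smoothness, both difference quotients converge pointwise to $\nabla f(x)\cdot z$ and $\nabla g(x)\cdot z$ respectively, are uniformly bounded by $\|\nabla f\|_\infty \|\nabla g\|_\infty$ (after smooth extension of $f,g$ to $\R^d$), and the indicator converges to $1$ for a.e.\ $(x,z)$. Dominated convergence then yields
\[
\int_\Omega \int_{B_1}(\nabla f(x)\cdot z)(\nabla g(x)\cdot z)\,\d z\,\d x
=\int_\Omega \nabla f(x)^T M\, \nabla g(x)\,\d x,
\]
with $M_{ij}=\int_{B_1}z_iz_j\,\d z$. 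By the rotational symmetry of $B_1$, the matrix $M$ is a scalar multiple of the identity with $M_{ii}=\frac{1}{d}\int_{B_1}|z|^2\,\d z = C_d$, so the limit is $C_d\int_\Omega \nabla f\cdot\nabla g\,\d x$, as desired.

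To conclude, I approximate $f \in W^{1,1}(\Omega)$ by $f_n \in C^1(\overline{\Omega})$ in $W^{1,1}(\Omega)$-norm. The uniform bilinear estimate controls the error $|T_\eps(f,g)-T_\eps(f_n,g)|$ by $C\|\nabla g\|_\infty \|f-f_n\|_{W^{1,1}(\Omega)}$, independently of $\eps$, while the smooth case yields $T_\eps(f_n,g) \to C_d\int_\Omega \nabla f_n\cdot\nabla g$ for each $n$; a standard three-$\varepsilon$ argument, taking $n$ large enough and then $\eps$ small enough, gives the full statement. The main technical obstacle is the uniform-in-$\eps$ bound near $\partial\Omega$, where the translation estimate would otherwise fail; invoking Stein's extension (available thanks to the Lipschitz regularity of $\Omega$) neutralizes this difficulty cleanly.
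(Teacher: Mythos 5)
Your proof is correct, and it takes a genuinely different route from the paper's. The paper also extends $f$ to $W^{1,1}(\R^d)$, but it then invokes a nontrivial quantitative Taylor-remainder result (the $L^1$-convergence to zero of the normalized first-order remainder $\eps^{-d}\int_{B_\eps(x)}|\bar f(y)-\bar f(x)-\nabla\bar f(x)\cdot(y-x)|\,\eps^{-1}\d y$, from a paper of Spector) to justify replacing the difference quotient of $f$ by its linearization directly at the level of $W^{1,1}$; it handles $g$ via a modulus-of-continuity estimate and then passes to the limit once. You instead prove the identity only for smooth $f$ (by a change of variables $y=x+\eps z$ and dominated convergence), establish a uniform-in-$\eps$ bilinear bound $|T_\eps(f,g)|\leq C\|\nabla g\|_\infty \|f\|_{W^{1,1}(\Omega)}$ using the elementary translation estimate $\|\tilde f(\cdot+h)-\tilde f\|_{L^1}\leq|h|\,\|\nabla\tilde f\|_{L^1}$ after Stein extension, and then close the argument by a $W^{1,1}$-density and three-$\varepsilon$ bookkeeping. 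What your route buys is self-containedness: you do not need the Spector-type Taylor convergence theorem, only the classical translation bound and density of smooth functions, at the cost of an extra approximation layer. What the paper's route buys is a single, direct limit passage without a density step. Two small points worth making explicit in your write-up: extending $g\in C^1(\overline\Omega)$ to a $C^1(\R^d)$ function with bounded gradient (or, equivalently, invoking the quasiconvexity of bounded Lipschitz domains) is needed for the Lipschitz estimate $|g(y)-g(x)|\leq\|\nabla g\|_\infty|y-x|$ when the segment $[x,y]$ may leave a nonconvex $\Omega$; and in the density step the uniform bound should be applied to $f-f_n$, which it controls linearly in $\|f-f_n\|_{W^{1,1}(\Omega)}$, precisely because the Stein extension is a bounded linear operator. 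Both are easy to supply and do not affect the validity of the argument.
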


\begin{proof}
We write first
\begin{align*}
\frac{1}{\eps^d}\iint_{\Omega\times\Omega}\frac{f(y)-f(x)}{\eps}\,&\frac{ g(y)- g(x)}{\eps}\mathds{1}_{|x-y|\leq \eps}\,\d x\d y \\
& = \int_{\Omega}\frac 1{\eps^d}\int_{\Omega\cap B_\eps(x)}\frac{f(y)-f(x)}{\eps}\,\frac{ g(y)- g(x)}{\eps}\,\d y\d x.
\end{align*}
By assumption $\Omega$ is an \emph{extension domain} \cite[Def.\ 3.20 and Prop.\ 3.21]{AFP00}, hence we can extend $f\in W^{1,1}(\Omega)$ to $\bar f\in W^{1,1}(\R^d)$ and in particular
\begin{align*}
\frac{1}{\eps^{d}}\int_{\Omega\cap B_\eps(x)} & \frac{|f(y)-f(x)-\nabla f(x)\cdot(y-x)|}{\eps}\,\d y
\\
& \quad = \frac{1}{\eps^{d}}\int_{\Omega\cap B_\eps(x)}\frac{|\bar f(y)-\bar f(x)-\nabla \bar f(x)\cdot(y-x)|}{\eps}\,\d y
\\
& \quad \leq \frac{1}{\eps^{d}}\int_{B_\eps(x)}\frac{|\bar f(y)-\bar f(x)-\nabla \bar f(x)\cdot(y-x)|}{\eps}\,\d y.
\end{align*}
By \cite[Theorem 1.3]{Spector16} the right-hand side converges to zero in $L^1(\R^d)$ as $\eps \to 0$, hence also in $L^1(\Omega)$, and we conclude that
\begin{equation}
\frac{1}{\eps^d}\int_{\Omega\cap B_\eps(x)} \frac{|f(y)-f(x)-\nabla f(x)\cdot (y-x)|}{\eps}\,\d y\to 0
\hspace{1cm}\mbox{in }L^1(\Omega)\mbox{ as }\eps \to 0.
\label{eq:taylor_f_L1}
\end{equation}
By the mean-value theorem, for any $y\in B_{\eps}(x)$ there exists a point $z_{x,y}$ lying on the segment $[x,y]$ such that $g(y)-g(x)=\nabla g(z_{xy})\cdot (y-x)$, hence
\begin{align*}
\frac{|g(y)-g(x)-\nabla g(x)\cdot (y-x)|}{\eps} 
& =
\frac{| \nabla g(z_{xy})\cdot(y-x)-\nabla  g(x)\cdot (y-x)|}{\eps}\\
& \leq 
| \nabla g(z_{xy})-\nabla  g(x)|\left|\frac{y-x}{\eps}\right|
\\
& \leq 
\sup\limits_{z\in B_\eps(x)}|\nabla g(z)-\nabla g(x)|
\leq C_d \omega(\eps),
\end{align*}
where $\omega$ is any uniform modulus of continuity of $\nabla g\in C(\overline{\Omega})$.
By definition $\omega(\eps)\to 0$ as $\eps\to 0$, hence
\begin{equation}
\sup\limits_{x\in\Omega}\sup\limits_{y\in B_\eps(x)\cap \Omega}\frac{| g(y)- g(x)-\nabla  g(x)\cdot (y-x)|}{\eps}
\to 0
\qqtext{as}\eps \to 0.
\label{eq:taylor_g_Linfty}
\end{equation}
The estimates \eqref{eq:taylor_f_L1} and \eqref{eq:taylor_g_Linfty} allow now to replace rigorously both difference quotients in \eqref{eq:IPP_eps} by their first-order Taylor expansions, whence
\begin{align}
\frac{1}{\eps^d}\iint_{\Omega\times\Omega} & \frac{f(y)-f(x)}{\eps}\,\frac{ g(y)- g(x)}{\eps}\mathds{1}_{|x-y|\leq \eps}\,\d x\d y \nonumber
\\
& \quad\underset{\eps\to 0}{\sim}
\int_{\Omega}\frac{1}{\eps^d}\int_{B_\eps(x) \cap \Omega}
\Big(\nabla f(x)\cdot\frac{y-x}{\eps}\Big)
\,
\Big(\nabla  g(x)\cdot\frac{y-x}{\eps}\Big)\,
\d y\d x \nonumber
\\
& \quad = \int_\Omega \nabla f(x)^t  \Bigg( \frac{1}{\eps^d}\int_{B_\eps(x)\cap\Omega}\left(\frac{y-x}\eps\right)^t\left(\frac{y-x}\eps\right)\rd y\Bigg)\nabla g(x) \,\rd x.
\label{eq:replace_diff_quotient_limit}
\end{align}
Since $B_\eps(x)\cap\Omega=B_\eps(x)$ for any $x\in\Omega$ and $\eps\leq \dist(x,\partial\Omega)$, we have for any fixed $x$ and $\eps$ small enough (depending on $x$)
$$
h_\eps(x):=\frac{1}{\eps^d}\int_{B_\eps(x) \cap \Omega}\left(\frac{y-x}\eps\right)^t\left(\frac{y-x}\eps\right)\rd y
= \frac{1}{\eps^d}\int_{B_\eps(x)}\left(\frac{y-x}\eps\right)^t\left(\frac{y-x}\eps\right)\rd y
= \int_{B_1}zz^t\,\rd z.
$$
In particular this gives the trivial pointwise convergence $h_\eps(x)\to \int_{B_1}zz^t\,\rd z$ as $\eps \to 0$.
It is moreover immediate to check that $h_\eps$ is uniformly bounded, hence by Lebesgue's dominated convergence in \eqref{eq:replace_diff_quotient_limit} with $\nabla f\in L^1,\nabla g\in L^\infty$ we conclude that
\[
\frac{1}{\eps^d}\iint_{\Omega\times\Omega}\frac{f(y)-f(x)}{\eps}\,\frac{ g(y)- g(x)}{\eps}\mathds{1}_{|x-y|\leq \eps}\,\d x\d y \to \int_{\Omega}\nabla f(x)^t\left(\int_{B_1}zz^t \,\d z\right) \nabla g(x)\,\d x
\]
as $\eps \to 0$. A straightforward symmetry argument finally gives that the matrix 
\[
\int_{B_1} zz^t \,\d z = \left(\int_{B_1}z_i^2\,\d z\right)\operatorname{Id}=\left(\frac{1}{d}\int_{B_1}|z|^2\,\d z\right)\operatorname{Id}, 
\]
hence the limiting integral evaluates to $C_d\int_{\Omega}\nabla f(x)\cdot \nabla g(x)\,\rd x$ with $C_d$ as in the statement and the proof is complete.
\end{proof}

Then, in the proof of Theorem~\ref{th:Linfty} and Theorem~\ref{rk:Lpmax}, we use the following Lemma, which is an easy extension of the classical expression of $L^p$ norm by duality.

\begin{lem}
\label{lem:positive_part}
Let $f : \Omega \to (- \infty, + \infty]$ be measurable and assume that $f^- \in L^2(\Omega)$. Then, for any $p \in [1,+\infty]$ and $p'$ its conjugate exponent there holds
\begin{equation*}
\sup_g \left\{ \int_\Omega f(x) g(x) \, \d x \,:\, g \in C^\infty(\overline{\Omega}), \; g \geq 0 \text{ and } \| g \|_{L^{p'}(\Omega)} \leq 1 \right\} = \| f^+ \|_{L^p(\Omega)},
\end{equation*}
where both sides of the equality could be $+ \infty$.
\end{lem}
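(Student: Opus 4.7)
The plan is to split the claimed equality into two inequalities. The direction $\leq$ is immediate from H\"older's inequality: for any admissible $g$, non-negativity of $f^- g$ yields
\[
\int_\Omega f g\,\d x = \int_\Omega f^+ g\,\d x - \int_\Omega f^- g\,\d x \leq \int_\Omega f^+ g\,\d x \leq \|f^+\|_{L^p(\Omega)} \|g\|_{L^{p'}(\Omega)} \leq \|f^+\|_{L^p(\Omega)}.
\]

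For the direction $\geq$, I would first treat the case $\|f^+\|_{L^p(\Omega)} < +\infty$, building for any $\eps > 0$ a smooth competitor attaining at least $\|f^+\|_{L^p(\Omega)} - \eps$. The classical $L^p$-$L^{p'}$ duality applied to the non-negative function $f^+$ furnishes a measurable $h \geq 0$ with $\|h\|_{L^{p'}(\Omega)} \leq 1$ and $\int_\Omega f^+ h\,\d x \geq \|f^+\|_{L^p(\Omega)} - \eps$. I would then modify $h$ in two steps: first replace $h$ by $h\,\mathds{1}_{\{f > 0\}}$ (still admissible, with the same integral against $f^+$, and now with $\int f^- h\,\d x = 0$, so $\int f h\,\d x = \int f^+ h\,\d x$); then truncate $h \mapsto \min(h,k)$ and invoke monotone convergence on $\int f^+ h\,\d x$ to reduce to the case $h \in L^\infty \cap L^{p'}$ supported in $\{f > 0\}$. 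To regularize, I extend $h$ by zero to $\R^d$ and convolve with a standard mollifier $\eta_\delta$, obtaining $g_\delta := h * \eta_\delta \in C^\infty(\overline{\Omega})$, non-negative, with $\|g_\delta\|_{L^{p'}(\Omega)} \leq 1$ by Young's convolution inequality. Passing $\delta \to 0$, the contribution $\int_\Omega f^- g_\delta\,\d x$ converges to $\int_\Omega f^- h\,\d x = 0$ by Cauchy--Schwarz, using $f^- \in L^2(\Omega)$ together with the $L^2$-convergence $g_\delta \to h$ (guaranteed because $h \in L^\infty$ has bounded support). The positive-part contribution $\int_\Omega f^+ g_\delta\,\d x$ converges to $\int_\Omega f^+ h\,\d x$ either via H\"older if $p' < \infty$ (using $g_\delta \to h$ in $L^{p'}$) or via dominated convergence with dominator $\|h\|_\infty f^+ \in L^1$ in the remaining case $p' = \infty$ (i.e.\ $p = 1$).

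The complementary case $\|f^+\|_{L^p(\Omega)} = +\infty$ reduces to the previous one by truncating $f$ from above: I would apply the finite case to $\tilde f_M := \min(f,M)$, whose positive part $\min(f^+,M)$ lies in $L^\infty \cap L^p$ with $\|\min(f^+,M)\|_{L^p(\Omega)} \nearrow +\infty$ by monotone convergence, and whose negative part is still $f^- \in L^2$. Since $\tilde f_M \leq f$ and $g \geq 0$, any admissible $g$ yields $\int f g\,\d x \geq \int \tilde f_M g\,\d x$, so the supremum is $+\infty$. The main technical point throughout is coordinating the support truncation (by $\mathds{1}_{\{f>0\}}$), the sup-norm truncation, and the mollification step so as to preserve simultaneously the admissibility conditions on $g_\delta$ (non-negativity and the $L^{p'}$ bound) while ensuring that the unwanted cross-term $\int f^- g_\delta\,\d x$ actually vanishes in the limit; this is the one place where the hypothesis $f^- \in L^2(\Omega)$ genuinely intervenes.
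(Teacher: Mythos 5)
Your proof is correct and follows essentially the same strategy as the paper's: reduce to a bounded competitor supported on $\{f>0\}$, mollify to obtain a smooth admissible $g$, use $f^-\in L^2(\Omega)$ to kill the cross term under $L^2$-convergence of the mollifications, and pass to the limit in the $f^+$ term via Fatou (paper) or H\"older/dominated convergence (yours). The only cosmetic difference is that the paper collapses your first two truncation steps into a one-line appeal to ``standard duality'' for $L^\infty$ competitors and needs no separate treatment of the case $\|f^+\|_{L^p}=+\infty$, whereas you spell those reductions out explicitly.
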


\begin{proof}
First, note that $\int_\Omega f(x) g(x) \,\d x$ is always well defined in $( - \infty, + \infty]$ as the negative part of $f$ is in $L^2(\Omega) \subset L^1(\Omega)$. Second, note that by standard duality the result would directly hold if the condition $g \in C^\infty(\overline{\Omega})$ were replaced by $g \in L^\infty(\Omega)$. Indeed, in this case it is always better to take $g$ supported on $\{ f \geq 0 \}$.

Thus we only need to prove that  
\begin{multline}
\label{eq:app_to_prove}
\sup_g \left\{ \int_\Omega f(x) g(x) \, \d x \,:\, g \in C^\infty(\overline{\Omega}), \; g \geq 0 \text{ and } \| g \|_{L^{p'}(\Omega)} \leq 1 \right\} \\
 = \sup_g \left\{ \int_\Omega f(x) g(x) \, \d x \,:\, g \in L^\infty(\Omega), \; g \geq 0 \text{ and } \| g \|_{L^{p'}(\Omega)} \leq 1 \right\},
\end{multline}
and even that the left-hand side is larger than the right-hand one, as the other inequality obviously holds. Thus, let us take $g \in L^\infty(\Omega)$ non-negative such that $\| g \|_{L^{p'}(\Omega)} \leq 1$. By a standard convolution, we can find a sequence $(g_n)_{n \in \N}$ of smooth non-negative functions such that $\| g_n \|_{L^{p'}(\Omega)} \leq 1$ and such that $(g_n)_{n \in \N}$ converges to $g$ in any $L^q(\Omega)$, in particular in $L^2(\Omega)$. Thus
\begin{equation*}
\lim_{n \to \infty} \int_\Omega f^-(x) g_n(x)\, \d x = \int_\Omega f^-(x) g(x)\, \d x, \quad \liminf_{n \to \infty} \int_\Omega f^+(x) g_n(x)\, \d x \geq \int_\Omega f^+(x) g(x) \, \d x,
\end{equation*} 
where we used the $L^2(\Omega)$ convergence on one hand, and Fatou's lemma on the other.
We conclude that $\liminf \int_\Omega f(x) g_n(x) \d x \geq \int_\Omega f(x) g(x) \d x$, and this enough to get \eqref{eq:app_to_prove}. 
\end{proof}

As the reader can see, the assumption $f^- \in L^2(\Omega)$ can in fact be relaxed to $f^- \in L^q(\Omega)$ for some $q > 1$.

\label{ap1}

\subsection*{Acknowledgments}
LM was funded by the Portuguese Science Foundation through a personal grant 2020/\\00162/CEECIND as well as the FCT project PTDC/MAT-STA/28812/2017. LT acknowledges financial support from FSMP Fondation Sciences Math\'ematiques de Paris. DV was partially supported by the FCT projects UID/MAT/00324/2020 and PTDC/MAT-PUR/28686/2017. 

\end{appendices}

%%%%%%%%%%%%%%%%%%%%%%%%%%%%%%%%%%%%%%%%%%%%%%%%%%%%%%%%%%%%%%%%%%%%%%%%%%%%%%%%%%%%%%%%%%%%%%%%%%%%%%%%%%%%%%%%%%%%%%%%%%%%%%%%%%%%%%%%%%%%%%%%%%%%%%%%%%%%%%%%

\bibliographystyle{abbrv}
{\small
\bibliography{biblio}}
\end{document}